\newcommand\later[1]\empty
\newcommand\val[1]{\left\lVert#1\right\rVert}
\newcommand\hide[1]{\textbf{#1}}
\renewcommand\hide[1]\empty
\newcommand\todo[1]{ [\textbf{!}#1]}
\renewcommand\todo[1]\empty
\newcommand\cone[2]{{#1}\!\left<#2\right>}
\def\mM{\modelsts{M}}
\newcommand\modelsts[1]{\framets{#1}}
\def\tiff{\textrm{ iff }}
\def\B{\mathbf{B}}
\def\Bik{\B_{i,k}}
\newcommand\frametop[2]{{#1{[#2]}}}
\def\frFh{\frametop{\frF}{h}}
\def\Wh{\overline{W}}
\def\Rh{\overline{R}}
\def\IL{\logicnamets{IL}}
\def\CL{\logicnamets{CL}}
\def\Int{{\textrm{(i)}}}
\def\Int{{\textsc{i}}}
\newcommand\axiomts[1]{{\mathrm{#1}}}
\def\Di{\Diamond}
\def\vf{\varphi}
\def\lra{\leftrightarrow}
\newcommand\framets[1]{\mathrm{{#1}}}
\def\frF{\framets{F}}
\def\frA{\framets{A}}
\def\frG{\framets{G}}
\def\AA{\forall}
\def\Imp{\Rightarrow}
\def\h{\operatorname{ht}}
\def\mo{\vDash}
\def\imp{\rightarrow}
\def\vd{\vdash}
\def\emp{\emptyset}
\def\vL{\axiomts{L}}
\def\restr{{\upharpoonright}}
\newcommand\logicts[1]{{\textsc{#1}}}
\newcommand\logicnamets[1]{{\logicts{#1}}}
\newcommand{\LK}[1]{\logicnamets{K#1}}
\newcommand{\LS}[1]{\logicnamets{S#1}}
\newcommand{\Tra}[1]{\logicnamets{Tr}_{#1}}
\def\wK4{\logicnamets{wK4}}
\def\DL{\logicnamets{DL}}
\def\WS5{\logicnamets{WS5}}
\def\MIPC{\logicnamets{MIPC}}
\def\iff{\;\Leftrightarrow\;}
\theoremstyle{definition}
\newtheorem{theorem}{Theorem}
\newtheorem{proposition}{Proposition}
\newtheorem*{remark*}{Remark}
\newtheorem*{question*}{Question}
\newtheorem{definition}{Definition}
\newtheorem{example}{Example}
\newtheorem*{theorem*}{Theorem}
\title{Glivenko's theorem, finite height, and local tabularity}
\author{Ilya B. Shapirovsky}
\affil{\small
Steklov Mathematical Institute of Russian Academy of Sciences \\
and\thanks{The work on this paper was supported by the Russian Science Foundation under grant 16-11-10252 and carried out at Steklov Mathematical Institute of Russian Academy of Sciences.}   \\
Institute for Information Transmission Problems of Russian Academy of Sciences
 }
\begin{document}

\maketitle

\begin{abstract}
Glivenko's theorem states that
a formula is derivable in classical propositional logic $\CL$ iff under the double negation it is derivable in
intuitionistic propositional logic $\IL$:
$\CL\vd\vf$ iff $\IL\vd\neg\neg\vf$.
Its analog for the modal logics $\LS{5}$ and $\LS{4}$ states that
$\LS{5}\vd \vf$   iff  $\LS{4} \vd \neg \Box \neg \Box \vf$.
In Kripke semantics, $\IL$ is the logic of partial orders, and  $\CL$  is the logic of partial orders of height 1. Likewise,
$\LS{4}$ is the logic of preorders, and $\LS{5}$ is the logic of equivalence relations, which are preorders of height 1.
In this paper we generalize Glivenko's translation for logics of arbitrary finite height.
\end{abstract}

\smallskip \noindent \textbf{Keywords:}
Glivenko's translation, modal logic, intermediate logic, finite height, pretransitive logic, local tabularity, local finiteness, top-heavy frame

\section{Introduction}
For a modal or intermediate logic $\vL$, let
$\vL[h]$ be its extension with the formula restricting the height of a Kripke frame by finite $h$.
In the intermediate case,  such formulas are defined as $B^\Int_0=\bot$,  $B^\Int_{h} = {p_{h} \vee (p_{h} \imp B^\Int_{h-1})}$,
and in the modal transitive case as $B_0=\bot$,  $B_{h} = p_{h} \to  \Box(\Di p_{h} \lor B_{h-1})$. In particular,
classical logic $\CL$ is the extension of intuitionistic logic $\IL$ with the formula $p\vee \neg p$, that is
$\CL=\IL[1]$. Similarly, $\LS{5}=\LS{4}[1]$. 
Glivenko's translation \cite{GLI29a} and its analog for the modal logics $\LS{5}$ and  $\LS{4}$ \cite{Mats-s4s5}
can be formulated as follows:
\begin{eqnarray}\label{eq:Gliv}
\IL[1] \vd \vf &\tiff &\IL \vd \neg\neg \vf, \\
\label{eq:Mats}
\LS{4}[1] \vd \vf &\tiff & \LS{4} \vd \Di \Box \vf.
\end{eqnarray}
For finite variable fragments of $\IL$ and $\LS{4}$, the above equivalences can be generalized for arbitrary finite height.
A \mbox{\em $k$-formula} is a formula in variables $p_0,\ldots p_{k-1}$.
Let $(W,R)$ be the $k$-generated canonical frame of $\LS{4}$ (that is, $W$ is the set of maximal $\LS{4}$-consistent sets of $k$-formulas).
It follows from \cite{ShehtRigNish}
(see also  \cite{ShehtPhD}, \cite{Fine85}, \cite{Bellissima1985}, \cite{Bellissima1986Ht})
that  there exist formulas $\B_{h,k}$ (and their intuitionistic analogs $\B_{h,k}^\Int$)
such that for every $x\in W$,
$\B_{h,k}\in x$  iff  the depth of $x$ in  $W$  is less than or equal to $h$.
We observe that for all finite $k$, for all $k$-formulas $\vf$,
\begin{eqnarray}
\IL[h+1]\vd\vf  &\tiff  &\IL\vd \bigwedge_{i\leq h} ((\vf\imp \Bik^\Int)\imp \Bik^\Int),\\
\label{eq:S4h}
\LS{4}[h+1] \vd\vf  &\tiff &\LS{4} \vd \bigwedge_{i\leq h} (\Box(\Box  \vf\imp \Bik)\imp \Bik).
\end{eqnarray}
In particular, for $h=0$ we have equivalences (\ref{eq:Gliv}) and (\ref{eq:Mats}), since
the formulas $\B_{0,k}$ and $\B_{0,k}^\Int$  are $\bot$ for all $k$.

\smallskip
Sometimes, analogs of the formulas $\B_{h,k}$ exist
for unimodal logics smaller than $\LS{4}$ and for polymodal logics.
A modal logic $\vL$ is  pretransitive (or weakly transitive, in another terminology), if the  transitive reflexive closure modality $\Di^*$ is expressible in $\vL$ \cite{KrachtBook}.
Namely, for the language with  $n$ modalities $\Di_i$ ($i<n$),  put $\Di^0 \vf = \vf$, $\Di^{m+1}\vf=\Di^m \vee_{i< n} \Di_i\vf$, $\Di^{\leq m} \vf =
\vee_{l\leq m} \Di^l \vf$.
A logic $\vL$  is {\em pretransitive} if
it contains $\Di^{m+1} p \imp \Di^{\leq m} p$ for some finite $m$.
In this case $\Di^{\leq m}$ plays the role of $\Di^*$.
The {\em height of a polymodal frame $(W,(R_i)_{i<n})$} is the height of the preorder
$(W, (\bigcup_{i<n}R_i)^*)$.
In the pretransitive case,
formulas of finite height can be defined analogously to the transitive case.

$\vL$ is said to be {\em $k$-tabular} if, up to the equivalence in $\vL$, there exist
only finitely many $k$-formulas. $\vL$ is {\em locally tabular} (or {\em locally finite}) if it is $k$-tabular for every finite $k$.

We show (Theorems \ref{thm:topheavy} and \ref{thm:main}) that if
$\vL$ is a pretransitive logic, $h,k< \omega$, and $\vL[h]$ is $k$-tabular, then:
\begin{enumerate}
\item
For every $i\leq h$,  there exists a formula $\Bik$ such that
$\Bik\in x$ iff   the depth of $x$ in the $k$-generated canonical frame of $\vL$  is less than or equal to $i$.
\item  For all $k$-formulas $\vf$,
\begin{equation}\label{eq:mainTransl}
\vL[h+1] \vd\vf \tiff \vL \vd \bigwedge_{i\leq h} (\Box^*(\Box^* \vf\imp \Bik)\imp \Bik).
\end{equation}
\end{enumerate}

The equivalence (\ref{eq:mainTransl}) 
generalizes (\ref{eq:S4h}). Recall that
a unimodal transitive logic is locally tabular iff it is of finite height iff it is 1-tabular
(\cite{Seg_Essay}, \cite{Max1975}).
In the non-transitive case the situation is much more complicated.
It follows from \cite{LocalTab16AiML} that every locally tabular (even 1-tabular) logic is a pretransitive logic of finite height;
however, it follows from
\cite{Makinson81} that
there exists a pretransitive $\vL$  such that
none of the logics $\vL[h]$  are 1-tabular.
In Section \ref{sec:coroll}  we discuss how $k$-tabularity of $\vL[h]$ depends on $h$ and $k$.
In particular, we construct the first example of a modal logic which is 1-tabular but not locally tabular.

\section{Preliminaries}
Fix a finite $n>0$; 
{\em $n$-modal formulas} are built from a countable set $\{p_0, p_1, \ldots \}$ of proposition letters,
the classical connectives $\imp$, $\bot$, and the modal connectives $\Di_i$, $i<n$;
the other Boolean connectives are defined as standard abbreviations;
$\Box_i$ abbreviates $\neg\Di_i\neg$. We omit the subscripts on the modalities when $n=1$.
By a {\em logic} we mean a {\em propositional $n$-modal normal logic}, that is a set of $n$-modal formulas
containing all classical tautologies, the formulas \mbox{$\Di_i (p\vee q) \imp \Di_i p\vee \Di_i q$}
and $\neg\Di_i \bot$ for all $i<n$, and closed under the rules of Modus Ponens, Substitution, and Monotonicity
(if $\vf\imp \psi$ is in the logic, then so is $\Di_i \vf\imp \Di_i \psi$).

For a logic $\vL$ and a set of formulas $\Psi$, the smallest logic containing $\vL\cup\Psi$ is denoted by $\vL+\Psi$.
For a formula $\vf$, the notation
$\vL+\vf$ abbreviates $\vL+\{\vf\}$.
In particular,
$\LK{4}=\LK+\Di\Di p\imp \Di p$, $\LS{4}=\LK{4}+{p\imp \Di p}$, $\LS{5}=\LS{4}+p\imp \Di\Box p$, where
$\LK{}$ denotes the smallest unimodal logic.  $\vL\vd \vf$ is a synonym for $\vf\in\vL$.

The truth and the validity of modal
formulas in Kripke frames and models are defined as usual, see, e.g., \cite{blackburn_modal_2002}.
By a {\em frame} we always mean a Kripke frame $(W,{(R_i)}_{i<n})$, $W\neq \emp$, $R_i\subseteq W\times W$.
We put $R_\frF=\cup_{i<n} R_i$.
The transitive reflexive closure of a relation $R$ is denoted by $R^*$; the notation $R(x)$ is used for the set $\{y\mid x R y\}$.
The {\em restriction} of $\frF$ onto its subset $V$, $\frF\restr V$ in symbols, is $(V, (R_i\cap (V\times V))_{i<n})$.
In particular, we put $\cone{\frF}{x} = \frF\restr R^*_\frF(x)$.

For $k\leq \omega$, a {\em $k$-formula} is a formula in proposition letters $p_i$, $i<k$.

Let $\vL$ be a consistent logic. For $k\leq \omega$, the {\em $k$-canonical model of} $\vL$ is built from
maximal $\vL$-consistent sets of $k$-formulas; the relations and the valuation are defined in the standard way, see e.g. \cite{Ch:Za:ML:1997}.
Recall the following fact.
\begin{proposition}[Canonical model theorem]
Let $\mM$ be the  $k$-canonical model of a logic $\vL$, $k\leq \omega$. Then
for all $k$-formulas $\vf$ we have:
\begin{enumerate}
\item
$\mM,x\mo \vf \tiff \vf \in x$, for all $x$ in $\mM$;
\item
$\mM\mo \vf \tiff \vL\vd\vf$.
\end{enumerate}
\end{proposition}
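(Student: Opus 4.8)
The plan is to establish the first claim, the Truth Lemma, by induction on the construction of the $k$-formula $\vf$, and then to read off the second claim as an immediate corollary. I will write $R_i$ and $V$ for the canonical relations and valuation, so that the points of $\mM$ are the maximal $\vL$-consistent sets of $k$-formulas, $V(p_i)=\{x\mid p_i\in x\}$ for $i<k$, and $x R_i y$ holds iff $\{\psi\mid\Box_i\psi\in x\}\subseteq y$; equivalently, iff $\psi\in y$ implies $\Di_i\psi\in x$. The properties of a maximal consistent set $x$ that I will invoke without comment are that it is deductively closed among the $k$-formulas, contains exactly one of $\psi$ and $\neg\psi$, and contains $\psi\imp\chi$ exactly when $\psi\in x$ implies $\chi\in x$; all of these follow from $x$ containing the classical tautologies and being closed under Modus Ponens.

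For the induction the base cases are immediate: $\bot\notin x$ by consistency while $\mM,x\not\mo\bot$ always, and for $i<k$ we have $\mM,x\mo p_i$ iff $x\in V(p_i)$ iff $p_i\in x$. The step for $\imp$ uses only the characterization of membership above, so $\mM,x\mo\vf\imp\psi$ is equivalent, via the induction hypothesis, to the condition that $\vf\in x$ implies $\psi\in x$, that is, to $\vf\imp\psi\in x$.

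The one case with real content is $\Di_i\vf$, where the induction hypothesis reduces the goal to showing that $\Di_i\vf\in x$ holds iff some point $y$ satisfies $x R_i y$ and $\vf\in y$. The implication from right to left is immediate from the definition of $R_i$. For the converse, the Existence Lemma, I would assume $\Di_i\vf\in x$ and show the set $\Gamma=\{\vf\}\cup\{\psi\mid\Box_i\psi\in x\}$ is $\vL$-consistent: otherwise a finite conjunction $\psi_1\wedge\dots\wedge\psi_m$ of members of $\{\psi\mid\Box_i\psi\in x\}$ would give $\vL\vd(\psi_1\wedge\dots\wedge\psi_m)\imp\neg\vf$, and then normality (Monotonicity together with $\neg\Di_i\bot$ and $\Di_i(p\vee q)\imp\Di_i p\vee\Di_i q$, which jointly make $\Box_i$ distribute over finite conjunctions) would yield $\vL\vd\Box_i\psi_1\wedge\dots\wedge\Box_i\psi_m\imp\Box_i\neg\vf$, forcing $\Box_i\neg\vf\in x$ and hence $\neg\Di_i\vf\in x$, contradicting consistency. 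A Lindenbaum extension of the consistent $\Gamma$ then produces the required $y$, since $\{\psi\mid\Box_i\psi\in x\}\subseteq\Gamma\subseteq y$ gives $x R_i y$ while $\vf\in y$.

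The second claim is then immediate: $\mM\mo\vf$ iff $\vf$ lies in every point of $\mM$, and a $k$-formula belongs to every maximal $\vL$-consistent set iff $\vL\vd\vf$, the nontrivial direction following by extending $\{\neg\vf\}$ to a point whenever $\vL\not\vd\vf$. The step I expect to require the most care is the Existence Lemma, and in particular its Lindenbaum extension: one must check that the enumeration and the consistency-preserving choices stay inside the language of $k$-formulas. This causes no difficulty, because that class is closed under the Boolean connectives and each $\Di_i$, and the bound $k\leq\omega$ enters nowhere else, so the argument is uniform in $k$.
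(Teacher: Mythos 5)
Your proof is correct and is exactly the standard argument that the paper implicitly invokes: the paper gives no proof of this proposition at all, stating it as a recalled fact with a pointer to the literature (Chagrov--Zakharyaschev), and what you have written --- the Truth Lemma by induction on $\vf$, the Existence Lemma for the $\Di_i$ case via a Lindenbaum extension, and the second claim as a corollary --- is precisely the textbook proof that citation stands for. Your closing remark, that the Lindenbaum construction must be carried out inside the class of $k$-formulas and that this class is closed under the connectives and each $\Di_i$, is the one point specific to the $k$-generated setting, and you handle it correctly.
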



A logic $\vL$ is said to be {\em $k$-tabular} if, up to the equivalence in $\vL$, there exist
only finitely many \mbox{$k$-formulas}. $\vL$ is {\em locally tabular} (or {\em locally finite}) if it is $k$-tabular for every finite $k$.
The following proposition is straightforward from the definitions.
\begin{proposition}\label{prop:LTbasic}
Let $\vL$ be a logic, $k<\omega$.
The following are equivalent:
\begin{itemize}
\item
$\vL$ is  $k$-tabular.
\item
The $k$-generated Lindenbaum-Tarski algebra of $\vL$ is finite.
\item
The $k$-canonical model of $\vL$ is finite.
\end{itemize}
\end{proposition}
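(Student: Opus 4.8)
The plan is to name the three conditions (1) $\vL$ is $k$-tabular, (2) the $k$-generated Lindenbaum--Tarski algebra $\Lind$ of $\vL$ is finite, and (3) the $k$-canonical model $\mM$ of $\vL$ is finite, and then to verify $(1)\iff(2)$, $(2)\Imp(3)$, and $(3)\Imp(1)$. The first of these I would treat as an unfolding of definitions: by construction the universe of $\Lind$ is the set of equivalence classes of $k$-formulas under the relation $\vf\sim\psi \iff \vL\vd\vf\lra\psi$, so the number of $k$-formulas modulo $\vL$-provable equivalence is exactly the cardinality of $\Lind$, and one side is finite precisely when the other is.

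For $(2)\Imp(3)$, recall that the points of $\mM$ are the maximal $\vL$-consistent sets of $k$-formulas. If $\vf\sim\psi$, then $\vL\vd\vf\lra\psi$, so any such set $x$, being deductively closed, satisfies $\vf\in x \iff \psi\in x$; thus every member of a given class lies in $x$ or none does. Hence $x$ is determined by the function on $\Lind$ recording, for each class, whether its members belong to $x$, and this assignment is injective in $x$. When $\Lind$ is finite this bounds the number of maximal consistent sets by $2^{|\Lind|}$, so $\mM$ is finite. (Equivalently, these sets are the ultrafilters of $\Lind$, and a finite Boolean algebra has only finitely many ultrafilters.)

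For $(3)\Imp(1)$ I would invoke the canonical model theorem. To each $k$-formula $\vf$ associate its truth set $\{x\in\mM : \vf\in x\}$. By parts~1 and~2 of that theorem, $\vL\vd\vf\lra\psi$ holds iff $\mM\mo\vf\lra\psi$ iff $\vf$ and $\psi$ have the same truth set; so the $\vL$-class of a $k$-formula is determined by a subset of the domain of $\mM$. If $\mM$ has $N$ points there are at most $2^{N}$ such subsets, hence at most $2^{N}$ classes, and $\vL$ is $k$-tabular. Closing this loop gives the equivalence of all three conditions.

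Since everything reduces to the definitions together with the already available canonical model theorem, I do not expect a genuine obstacle here; the proposition really is routine. The only point worth isolating is the passage between algebraic and model-theoretic finiteness in $(2)\Imp(3)$, i.e.\ the determination of a maximal $\vL$-consistent set by the finitely many classes in $\Lind$ (equivalently, the identification of such sets with the ultrafilters of $\Lind$), and this is exactly where the bound $|\mM|\leq 2^{|\Lind|}$ comes from.
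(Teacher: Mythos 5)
Your proof is correct, and it is exactly the routine argument the paper has in mind: the paper omits the proof entirely, remarking only that the proposition ``is straightforward from the definitions,'' and your three steps (definitional unfolding for $k$-tabularity versus finiteness of the Lindenbaum--Tarski algebra, identification of canonical-model points with ultrafilters of that algebra, and the Canonical model theorem for the converse) are precisely that straightforward verification.
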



\smallskip

A unimodal logic is {\em transitive} if it contains the formula $\Di\Di p\imp \Di p$; recall that this formula expresses
transitivity of a binary relation. Below we consider a weaker property,
{\em pretransitivity} of (polymodal) logics and frames.

For a binary relation $R$ on a set $W$, put $R^{\leq m}=  \cup_{i \leq m} R^i$, where $R^0=Id(W)$, $R^{i+1}=R\circ R^i$.
$R$ is called {\em $m$-transitive}  if $R^{\leq m}=R^*$.
$R$ is {\em pretransitive} if it is $m$-transitive for some $m$.
A frame $\frF$
is {\em $m$-transitive} if $R_\frF$ is $m$-transitive.

Let
$
\Di^0 \vf = \vf, ~\Di^{i+1}\vf=\Di^i (\Di_0\vf \vee \ldots \vee \Di_{n-1}\vf) , ~\Di^{\leq m} \vf =
\vee_{i\leq m} \Di^i \vf,~  \Box^{\leq m} \vf =\neg \Di^{\leq m} \neg \vf$.

\begin{proposition}
Let $\frF$ be a frame. The following are equivalent:
\begin{itemize}
\item 
$\frF$ is $m$-transitive;
\item
$R_\frF^{m+1}\subseteq R_\frF^{\leq m}$;
\item
$\frF\mo \Di^{m+1} p \imp \Di^{\leq m} p$.
\end{itemize}
\end{proposition}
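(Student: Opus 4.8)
The plan is to prove the three-way equivalence as two separate biconditionals: first the purely relational equivalence between $m$-transitivity and the containment $R_\frF^{m+1}\subseteq R_\frF^{\leq m}$, and then the semantic equivalence between that containment and validity of $\Di^{m+1} p\imp\Di^{\leq m} p$ on $\frF$. The engine of the second step is a lemma describing the operators $\Di^i$ semantically, so I would isolate that first.

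For the relational equivalence, recall that $\frF$ being $m$-transitive means $R_\frF^{\leq m}=R_\frF^{*}$. Since $R_\frF^{\leq m}\subseteq R_\frF^{*}$ holds trivially and $R_\frF^{m+1}\subseteq R_\frF^{*}$, $m$-transitivity immediately gives the containment. The converse is the one step that needs an argument: assuming $R_\frF^{m+1}\subseteq R_\frF^{\leq m}$, I would show by induction on $l$ that $R_\frF^{m+1+l}\subseteq R_\frF^{\leq m}$, the step being $R_\frF^{m+1+(l+1)}=R_\frF\circ R_\frF^{m+1+l}\subseteq R_\frF\circ R_\frF^{\leq m}\subseteq R_\frF^{\leq m+1}=R_\frF^{\leq m}$, where the last equality uses the hypothesis. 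This yields $R_\frF^{j}\subseteq R_\frF^{\leq m}$ for every $j$, hence $R_\frF^{*}\subseteq R_\frF^{\leq m}$ and therefore equality.

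For the semantic part, I would first prove by induction on $i$ that in any model on $\frF$ and at any point $x$, $x\mo\Di^i\vf$ iff $\vf$ holds at some $y$ with $x\,R_\frF^{i}\,y$. The base case $i=0$ is immediate; for the step, $\Di_0\vf\vee\ldots\vee\Di_{n-1}\vf$ is true exactly at points having an $R_\frF$-successor satisfying $\vf$ (because $R_\frF=\cup_{i<n}R_i$), so unfolding $\Di^{i+1}\vf=\Di^i(\Di_0\vf\vee\ldots\vee\Di_{n-1}\vf)$ and applying the induction hypothesis appends one $R_\frF$-step to $R_\frF^{i}$, giving $R_\frF^{i+1}$. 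Hence $x\mo\Di^{\leq m}\vf$ iff $\vf$ holds at some $R_\frF^{\leq m}$-successor of $x$. Granting this, the containment yields validity: any witness for $\Di^{m+1} p$ at $x$ is then an $R_\frF^{\leq m}$-successor, so it witnesses $\Di^{\leq m} p$. Conversely, if the containment fails, pick $x\,R_\frF^{m+1}\,y$ with not $x\,R_\frF^{\leq m}\,y$ and take the valuation making $p$ true only at $y$; then $\Di^{m+1} p$ holds at $x$ but $\Di^{\leq m} p$ fails there, refuting the formula. The main obstacle is purely bookkeeping, namely the induction in the relational step confirming that the single containment $R_\frF^{m+1}\subseteq R_\frF^{\leq m}$ propagates to all higher powers; the diamond computation and the refuting-valuation argument are both standard.
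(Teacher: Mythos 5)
Your proof is correct: both the propagation of $R_\frF^{m+1}\subseteq R_\frF^{\leq m}$ to all higher powers and the semantic characterization of $\Di^i$ via $R_\frF^i$-successors are exactly the standard "straightforward" argument that the paper itself omits, deferring instead to \cite{KrachtBook}. Nothing is missing.
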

The proof is straightforward, details can be found, e.g., in \cite{KrachtBook}.

\begin{definition}
A logic $\vL$ is said to be {\em $m$-transitive} if
$\vL\vd \Di^{m+1} p \imp \Di^{\leq m} p$.
$\vL$ is {\em pretransitive} if it is $m$-transitive for some $m\geq 0$.\footnote{Pretransitive logics sometimes are called {\em weakly transitive}. However, in the other terminology, the term `weakly transitive' is used for logics containing the formula
$\Di\Di p\imp \Di p\vee p$.}.
\end{definition}

If $\vL$ is pretransitive, then there exists the least $m$ such that
 $\vL$  is $m$-transitive; in this case we write $\Di^* \vf$  for $\Di^{\leq m}\vf$,  and  $\Box^* \vf$  for  $\Box^{\leq m}\vf$ .

 For a unimodal formula  $\vf$,
 $\vf^{[*]}$ denotes the formula obtained from $\vf$ by replacing $\Di$ with $\Di^{*}$ and $\Box$ with $\Box^{*}$.

\begin{proposition}\label{prop:fragmS4}
For a pretransitive logic $\vL$, the set $\{\vf\mid \vL\vd \vf^{[*]} \}$ is a logic containing $\LS{4}$.
\end{proposition}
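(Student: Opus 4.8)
The plan is to write $L^{*}=\{\vf\mid \vL\vd \vf^{[*]}\}$ and to check directly that it satisfies the closure conditions in the definition of a logic, and then that the axioms of $\LS{4}$ belong to it. The whole argument rests on the observation that the translation $(\cdot)^{[*]}$ commutes with the Boolean connectives and with substitution, and that $(\Box\vf)^{[*]}=\Box^{*}\vf^{[*]}$; thus $(\vf\imp\psi)^{[*]}=\vf^{[*]}\imp\psi^{[*]}$ and $(\vf[\psi/p])^{[*]}=\vf^{[*]}[\psi^{[*]}/p]$. The underlying idea is that $\Di^{*}=\Di^{\leq m}$ (for the least $m$ with $\vL\vd \Di^{m+1}p\imp\Di^{\leq m}p$) behaves syntactically like the $\Di$ of $\LS{4}$, i.e.\ like a reflexive transitive closure; the task is to establish this purely syntactically, since $\vL$ need not be Kripke complete.

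First I would show that $L^{*}$ is a logic. Since $\Di^{\leq m}\vf=\vee_{i\leq m}\Di^{i}\vf$ is a disjunction of compositions of the basic modalities, in any normal logic $\Di^{\leq m}$ distributes over disjunction, is monotone, and satisfies $\Di^{\leq m}\bot\lra\bot$; hence $\vL\vd \Di^{*}(p\vee q)\imp\Di^{*}p\vee\Di^{*}q$ and $\vL\vd\neg\Di^{*}\bot$, which are the translations of the two normality axioms. Because the translation fixes the Boolean structure, the translation of any classical tautology is again a classical tautology and so lies in $\vL$. Closure under Modus Ponens and Substitution is immediate from the two commutation identities above, and closure under Monotonicity follows from the monotonicity of $\Di^{\leq m}$ in $\vL$ (from $\vf^{[*]}\imp\psi^{[*]}$ derive $\Di^{i}\vf^{[*]}\imp\Di^{i}\psi^{[*]}$ for each $i\leq m$ and take the disjunction). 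This gives $\LK{}\subseteq L^{*}$.

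It remains to check the two extra axioms of $\LS{4}$. The reflexivity axiom $p\imp\Di p$ translates to $p\imp\Di^{*}p$, which is a tautology, since $\Di^{*}p=\vee_{i\leq m}\Di^{i}p$ contains the disjunct $\Di^{0}p=p$. The transitivity axiom $\Di\Di p\imp\Di p$ translates to $\Di^{*}\Di^{*}p\imp\Di^{*}p$, and this is the heart of the matter. Using $\Di^{i}\Di^{j}\vf=\Di^{i+j}\vf$ together with distribution of $\Di^{i}$ over disjunction, one gets $\vL\vd\Di^{*}\Di^{*}p\lra\Di^{\leq 2m}p$, so it suffices to prove $\vL\vd\Di^{\leq 2m}p\imp\Di^{\leq m}p$. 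I would prove, by induction on $s\geq 0$, that $\vL\vd\Di^{\leq m+s}p\imp\Di^{\leq m}p$: the step treats the new disjunct $\Di^{m+s+1}p=\Di^{s}\Di^{m+1}p$, applies monotonicity to the pretransitivity axiom $\Di^{m+1}p\imp\Di^{\leq m}p$ to obtain $\Di^{m+s+1}p\imp\Di^{\leq m+s}p$, and then invokes the induction hypothesis; taking $s=m$ yields the claim. The main obstacle is exactly this step: one cannot simply invoke $R^{\leq m}=R^{*}$ on frames (as $\vL$ may be incomplete), so the transitivity of $\Di^{*}$ must be extracted syntactically from the single pretransitivity axiom by such an induction. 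Once it is in hand, $L^{*}$ is a normal logic containing all axioms of $\LS{4}$, and hence $\LS{4}\subseteq L^{*}$.
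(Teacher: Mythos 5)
Your proposal is correct. There is, however, no detailed argument in the paper to compare it against: the paper's entire proof of Proposition~\ref{prop:fragmS4} is the citation ``Follows from \cite[Lemma 1.3.45]{ShefSkvGab}'', so what you have done is reconstruct, from scratch, the syntactic content that the paper outsources to that lemma. Your decomposition is the natural one and the key steps all check out: the translation $(\cdot)^{[*]}$ commutes with the Boolean connectives and with substitution; $\Di^i\Di^j\vf$ and $\Di^{i+j}\vf$ are literally the same formula (both equal $\Delta^{i+j}\vf$, where $\Delta\vf=\Di_0\vf\vee\ldots\vee\Di_{n-1}\vf$), so together with distribution of each $\Di^i$ over disjunction you correctly reduce the translated transitivity axiom to $\Di^{\leq 2m}p\imp\Di^{\leq m}p$; and your induction on $s$, which derives $\Di^{\leq m+s}p\imp\Di^{\leq m}p$ by applying the derived monotonicity rule for $\Di^s$ to the pretransitivity axiom and then invoking the induction hypothesis, is exactly the right way to extract transitivity of $\Di^*$ from the single axiom. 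Your insistence on keeping everything syntactic is also well placed: a pretransitive logic need not be Kripke complete, so the frame-theoretic identity $R^{\leq m}=R^*$ is not available as a shortcut. The only thing the paper's route buys is brevity; yours buys self-containedness at essentially no extra cost, and it makes visible where pretransitivity (as opposed to mere normality) is actually used, namely only in the transitivity axiom of $\LS{4}$.
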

\begin{proof}
Follows from \cite[Lemma 1.3.45]{ShefSkvGab}.
\end{proof}

\smallskip

A poset is of {\em height} $h<\omega$
if it contains a chain of $h$ elements and no chains of cardinality $>h$.

A {\em cluster} in a frame $\frF$ is an equivalence class with respect to the relation $\sim_\frF\; =\;R^*_\frF\cap {R^*_\frF}^{-1}$.
For clusters $C,~D$, put $C\leq_\frF D$ iff $x R^*_\frF y$ for some $x\in C, y\in D$.
The poset $(W{/}{\sim_\frF},\leq_\frF)$ is called the {\em skeleton of} $\frF$.
The {\em height of a frame} $\frF$, in symbols $\h(\frF)$, is the height of its skeleton.

Put
$$
B_0 =\bot,\quad
B_{i+1} = p_{i+1} \to  \Box^{*}(\Di^{*} p_{i+1} \lor B_i).
$$
In the unimodal transitive case, the formula $B_h$ expresses the fact that the height of a frame $\leq h$ \cite{Seg_Essay}.
In the case when $\frF=(W,(R_i)_{i<n})$ is $m$-transitive,  the operator $\Di^*=\Di^{\leq m}$  relates to $R^*_\frF$.
Since
the height of $\frF$  is the height of the preorder $(W,R^*_\frF)$, we have $\frF\mo B_h$  iff  $\h(\frF)\leq h.$

\begin{definition}
A pretransitive logic is of {\em finite height} if it contains $B_h$ for some $h<\omega$.
For a pretransitive $\vL$,  we put $$\vL[h]=\vL+B_h.$$
\end{definition}

\begin{example}
Unimodal examples of 1-transitive logics are
$\LS{4}$, $\wK4=\LK{}+\Di\Di p\imp \Di p \vee p$.
The logic $\LS{5}$ and the {\em difference logic} $\DL=\wK4+p\imp \Box\Di p$ are examples of logics of height 1.

A well-known logic $\LK{5}=\LK{}+\Di p\imp \Box\Di p$ is a  2-transitive logic of height 2.
To show this, recall  that $\LK{5}$ is Kripke complete and
its frames are those that validate the sentence $\AA x\AA y\AA z (xRy \wedge xRz \rightarrow yRz)$.
Every $\LK{5}$-frame is 2-transitive. Indeed, suppose  that $aRbRcRd$ for some elements of a $\LK{5}$-frame.
Then $bRb$; we also have $bRc$, thus $cRb$; from $cRb$ and $cRd$ we infer that $bRd$. Thus $a R^2 d$.
It is not difficult to see that if a $\LK{5}$-frame $\frF$ has an irreflexive serial point, then the height of $\frF$ is 2; otherwise $\frF$ is a disjoint sum of $\LS{5}$-frames and irreflexive singletons,
so its height is 1.
\end{example}

\begin{theorem}[\cite{Seg_Essay,Max1975}]\label{thm:seg-max-crit}
A unimodal transitive logic is locally tabular iff it is of finite height.
\end{theorem}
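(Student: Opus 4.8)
The plan is to prove the two implications separately, in each case using Proposition~\ref{prop:LTbasic} to trade $k$-tabularity for finiteness of the $k$-canonical model.

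For the direction from finite height to local tabularity, fix a finite $k$ and let $\mM=(W,R,V)$ be the $k$-canonical model of $\vL$. Since $\vL\supseteq\LK{4}$ the frame $(W,R)$ is transitive, and since $B_h\in\vL$ the Segerberg height formula is valid on the canonical frame (the logics of bounded height being canonical), so $(W,R)$ has height $\le h$ and every cone $\cone{\mM}{x}$ has depth at most $h$. I would then bound the number of complete $k$-theories realized in $\mM$ by induction on the depth $d\le h$ of a point. At depth $1$ the cone of $x$ is a single cluster, on which $R$ is either empty or universal; an easy induction on the structure of a formula shows that its truth at $x$ depends only on the propositional type of $x$ together with the set of propositional types occurring in the cluster, and there are finitely many such data. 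At depth $d+1$, transitivity forces all points of the cluster of $x$ to share the same proper successors, namely the union of the strictly higher clusters, whose cones have depth $\le d$ and hence realize theories from the finite set supplied by the induction hypothesis; a second induction on formula structure shows that the theory of $x$ is determined by its propositional type, the set of propositional types in its cluster, and the finite set of theories realized strictly above it. Thus only finitely many theories occur, $W$ is finite, and $\vL$ is $k$-tabular; as $k$ was arbitrary, $\vL$ is locally tabular.

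For the converse I argue by contraposition: assume $\vL$ is transitive but not of finite height, so $B_h\notin\vL$ for every $h$. If $\h(\cone{\mM}{x})\le h$ then the transitive cone validates $B_h$ by the frame criterion recalled above, and in particular $\mM,x\mo B_h$; hence $\neg B_h\in x$ forces a chain of more than $h$ clusters inside $\cone{\mM}{x}$. Since $\neg B_h$ is consistent for every $h$, the canonical model contains chains of clusters of unbounded length. I would then exhibit an infinite family of pairwise $\vL$-inequivalent formulas in a single variable $p$ --- a one-variable analogue of the height formulas, each $\gamma_h$ detecting a strictly $p$-alternating $R$-chain of length $h$ --- and argue that along a sufficiently long cluster-chain such an alternating pattern is realizable, so that the $\gamma_h$ receive pairwise distinct extensions in the $1$-canonical model. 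Consequently the $1$-canonical model is infinite, $\vL$ is not $1$-tabular, and a fortiori not locally tabular.

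I expect the genuine obstacle to be exactly this last step: showing that a single variable already separates unbounded heights uniformly for every transitive logic. Naive alternating gauges can be killed by extra axioms (for instance anything forcing the colour of $p$ to propagate along $R$), and one must check either that such axioms already collapse the height to a finite value, or else replace the alternation pattern by one adapted to $\vL$. Establishing this non-collapse --- that unbounded cluster-chains in the canonical model force unboundedly many $1$-types --- is the heart of Maksimova's half of the theorem, whereas the finite-height direction is the essentially routine counting induction sketched above.
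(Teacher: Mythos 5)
First, a point of comparison: the paper does not prove this theorem at all --- it is quoted with citations to Segerberg and Maksimova and then used as a black box (e.g.\ in the proof that $1$-tabularity implies finite height, and in Section~\ref{sec:coroll}). So your proposal can only be judged on its own merits. On those merits, your first direction (finite height implies local tabularity) is the standard counting argument and is sound as a sketch: every point of the $k$-canonical frame has depth $\leq h$ because every $k$-instance of $B_h$ belongs to every maximal consistent set (this is exactly Proposition~\ref{prop:canon-pointwise}), and then one counts theories by induction on depth. One correction, though: the theory of $x$ is \emph{not} determined by its propositional type, the set of types in its cluster, and the theories strictly above it; you must also record whether the cluster of $x$ is degenerate or proper. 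A reflexive point and an irreflexive point with identical remaining data differ on $\Di p$ (take $p$ true at both, and the same $\neg p$ theory above). You implicitly carry this bit in your depth-$1$ base case (``$R$ is either empty or universal'') but drop it in the inductive step; adding it is harmless for finiteness, so the direction survives.

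The genuine gap is the converse, and you say so yourself. Your contraposition correctly reduces it to: unbounded cluster-chains in the canonical model force infinitely many pairwise $\vL$-inequivalent formulas in one variable (or in any \emph{fixed} finite number of variables --- the weakening does not help, since the formulas $B_n$ witnessing infinite height use $n$ variables, so some device for compressing variables is needed either way). Nothing in your sketch establishes this. The obstruction you name is real and is precisely why this is a theorem of Maksimova rather than an exercise: the valuation of $p$ in the canonical model is fixed by membership, so an ``alternating pattern'' cannot simply be declared along a chain. One must either exhibit a substitution $p \mapsto \delta$, with $\delta$ a formula in the original variables, under which the one-variable gauges $\gamma_i$ become separated at actual points of the canonical model, or argue algebraically that the one-generated free algebra of $\vL$ is infinite; and the construction must work uniformly for an arbitrary transitive logic of infinite height, coping both with long chains of degenerate clusters and with large proper clusters, and surviving whatever additional axioms $\vL$ happens to have. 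Your closing sentence (``one must check either that such axioms already collapse the height\ldots or else replace the alternation pattern by one adapted to $\vL$'') is a statement of the problem, not a solution to it. As it stands you have proved, modulo routine details, only Segerberg's half of the criterion; Maksimova's half is missing.
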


In \cite{LocalTab16AiML}, it was shown that every locally tabular unimodal logic is a pretransitive logic of finite height;
in fact, the proof yields the following stronger formulation.
\begin{theorem}
If a logic is 1-tabular, then it is  a pretransitive logic of finite height.
\end{theorem}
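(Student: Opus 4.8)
The plan is to split the statement into its two halves---pretransitivity and finite height---and to obtain the second half by reducing to the already-known transitive case through the $[*]$-translation of Proposition~\ref{prop:fragmS4}. Throughout, let $\Lind$ be the $1$-generated Lindenbaum--Tarski algebra of $\vL$, which is finite by $1$-tabularity and Proposition~\ref{prop:LTbasic}. For pretransitivity I would look at the $1$-formulas $\Di^{\leq 0} p,\ \Di^{\leq 1} p,\ \ldots$. Since $\Di^{\leq i+1} p$ is $\Di^{\leq i} p \vee \Di^{i+1} p$, we have $\vL\vd \Di^{\leq i} p \imp \Di^{\leq i+1} p$, so their classes form a non-decreasing sequence in $\Lind$. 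As $\Lind$ is finite it contains no infinite strictly ascending chain, so $[\Di^{\leq m} p] = [\Di^{\leq m+1} p]$ for some $m$; this gives $\vL\vd \Di^{\leq m+1} p\imp \Di^{\leq m} p$, equivalently $\vL\vd \Di^{m+1} p \imp \Di^{\leq m} p$. Hence $\vL$ is $m$-transitive, and in particular $\Di^{*},\Box^{*}$ are now available.

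For finite height I would pass to the logic $\vL^{*} = \{\vf\mid \vL\vd \vf^{[*]}\}$, which by Proposition~\ref{prop:fragmS4} is a unimodal logic containing $\LS{4}$, hence transitive. The translation $\vf\mapsto\vf^{[*]}$ carries $1$-formulas to $1$-formulas, commutes with the Boolean connectives, and satisfies $\vL^{*}\vd\vf$ iff $\vL\vd\vf^{[*]}$; therefore $[\vf]_{\vL^{*}}\mapsto[\vf^{[*]}]_{\vL}$ is a well-defined \emph{injection} of the $\vL^{*}$-classes of $1$-formulas into the $\vL$-classes of $1$-formulas. Since $\vL$ is $1$-tabular, $\vL^{*}$ is $1$-tabular as well. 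Being transitive and $1$-tabular, $\vL^{*}$ is of finite height by the Segerberg--Maksimova criterion (Theorem~\ref{thm:seg-max-crit} and \cite{Seg_Essay,Max1975}): it contains the $\LS{4}$ height formula for some $h$. Because the formula $B_h$ of the Definition is exactly the $[*]$-translate of that $\LS{4}$ height formula (replacing $\Box,\Di$ by $\Box^{*},\Di^{*}$), we conclude $B_h\in\vL$, so $\vL$ is of finite height.

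The stabilization argument and the fact that $[*]$ commutes with the Booleans are routine. The step I would check most carefully---and the real content of the proof---is the transfer of $1$-tabularity from $\vL$ to its transitive companion $\vL^{*}$ via the injectivity of the induced map on equivalence classes, together with the observation that the pretransitive height formula $B_h$ is precisely the $[*]$-image of Segerberg's $\LS{4}$ height formula, which is what allows the classical criterion to be imported into the pretransitive setting.
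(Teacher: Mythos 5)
Your proof is correct, and its finite-height half follows exactly the paper's own route: pass to $^*\vL=\{\vf\mid \vL\vd \vf^{[*]}\}$ via Proposition~\ref{prop:fragmS4}, transfer $1$-tabularity to $^*\vL$, combine Maksimova's result from \cite{Max1975} (for transitive logics, $1$-tabularity implies local tabularity) with Theorem~\ref{thm:seg-max-crit} to get finite height of $^*\vL$, and pull the height axiom back along $[*]$. You in fact supply two details the paper leaves implicit: the injection $[\vf]_{^*\vL}\mapsto[\vf^{[*]}]_{\vL}$ that justifies the transfer of $1$-tabularity, and the identity of the pretransitive $B_h$ with the $[*]$-translate of the $\LS{4}$ height formula; one small point worth stating explicitly is that Theorem~\ref{thm:seg-max-crit} as formulated requires local tabularity, so the appeal to \cite{Max1975} is doing real work there, just as it does in the paper's own proof. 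Where you genuinely diverge is the pretransitivity half. The paper argues semantically: by Proposition~\ref{prop:LTbasic} the $1$-canonical frame is finite, every finite frame is $m$-transitive for some $m$, and validity of the $1$-formula $\Di^{m+1}p\imp\Di^{\leq m}p$ in the canonical model yields derivability. You argue algebraically: the non-decreasing chain $[\Di^{\leq 0}p]\leq[\Di^{\leq 1}p]\leq\ldots$ in the finite algebra $\Lind$ must have two consecutive equal terms, which immediately gives $\vL\vd\Di^{m+1}p\imp\Di^{\leq m}p$. Your version is purely syntactic and avoids invoking the canonical model theorem for this step, while the paper's is a one-liner given the frame-theoretic machinery it has already set up; both hinge on the same finiteness supplied by Proposition~\ref{prop:LTbasic}, so the difference is one of technique rather than substance.
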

\begin{proof}
Let $\vL$ be  1-tabular. Then  its 1-canonical frame is finite. Every finite frame is $m$-transitive for some $m$.
Thus $\vL$ is $m$-transitive.

By Proposition \ref{prop:fragmS4},  the set $^*\vL=\{\vf\mid \vL\vd \vf^{[*]} \}$ is a logic containing $\LS{4}$.
Since $\vL$ is 1-tabular, $^*\vL$ is 1-tabular. In \cite{Max1975}, it was shown that for transitive logics 1-tabularity implies
local tabularity. 
Thus $^*\vL$ is of finite height. It follows that $\vL$ is of finite height too.
\end{proof}

Thus, all locally tabular logics are pretransitive of finite height. However,  unlike the transitive case, the converse is not true in general
even for unimodal logics.
Let $\Tra{m}$ be the smallest $m$-transitive unimodal logic.
For $m\geq 2$, $h\geq 1$, none of the logics $\Tra{m}[h]$  are locally tabular \cite{Byrd78};
moreover, they are not $1$-tabular \cite{Makinson81}.

\section{Translation for logics of height 1}\label{sec:Gliv1}
For a pretransitive logics $\vL$,   $\vL[1] =\vL+B_1$, that is $\vL[1]$ is the smallest logic containing $\vL\cup\{p\imp \Box^*\Di^*p\}$.
It is known that $\LS{4}[1]=\LS{5}\vd \vf$  iff $\LS{4} \vd \Di \Box \vf$, and
$\LS{5}\vd \Box \psi \imp \Box \vf \tiff \LS{4}  \vd \Di \Box \psi \imp \Di \Box \vf$
\cite{Mats-s4s5}, \cite{Rybakov1992}.
In \cite{KudShap2017Engl}, it was shown that in the pretransitive unimodal case we have
$\vL[1]\vd \vf$  iff $\vL \vd \Di^*\Box^*\vf.$
In this section we generalize these facts to the polymodal case using the maximality property of pretransitive canonical frames
(see Proposition \ref{prop:max} below).

\begin{proposition}\label{prop:canon-Di-R}
Let $\frF$ be the $k$-canonical frame of a pretransitive logic $\vL$, $k\leq \omega$. For all  $x,y$ in $\frF$, we have
$$x R_\frF^* y  \;\tiff\;   \AA\vf\;(\vf\in y \;\Imp\; \Di^* \vf\in x).$$
\end{proposition}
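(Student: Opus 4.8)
The plan is to read both sides of the equivalence as comparing two relations on the canonical frame $\frF$: the transitive reflexive closure $R^{*}_\frF$, and the relation $S$ defined by $x\,S\,y \iff \AA\vf\,(\vf\in y\Imp\Di^{*}\vf\in x)$, which is just the canonical accessibility relation of the derived modality $\Di^{*}=\Di^{\leq m}$ (with $m$ least such that $\vL$ is $m$-transitive). I would first record the facts about $\Di^{*}$ that drive everything. By Proposition \ref{prop:fragmS4} the $[*]$-translations of the $\LS{4}$-theorems $p\imp\Di p$ and $\Di\Di p\imp\Di p$ lie in $\vL$, so $\vL\vd\vf\imp\Di^{*}\vf$ and $\vL\vd\Di^{*}\Di^{*}\vf\imp\Di^{*}\vf$; and since $\Di^{*}\vf=\bigvee_{l\leq m}\Di^{l}\vf$ contains $\Di^{1}\vf=\Di_0\vf\vee\dots\vee\Di_{n-1}\vf$ as a disjunct, $\vL\vd\Di_i\vf\imp\Di^{*}\vf$ for each $i<n$ (the case $m=0$ is degenerate: $\Di^{*}\vf=\vf$, $R^{*}_\frF=Id$, and the claim is immediate). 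The strategy is then to prove $R^{*}_\frF\subseteq S$ and $S\subseteq R^{\leq m}_\frF\subseteq R^{*}_\frF$, and to sandwich.

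The implication $R^{*}_\frF\subseteq S$ (that is, $\Imp$ in the statement) is the routine half. Using the three facts above one checks directly that $S$ is reflexive (from $\vf\imp\Di^{*}\vf$), transitive (from $\Di^{*}\Di^{*}\vf\imp\Di^{*}\vf$, pushing a formula $\vf\in z$ with $y\,S\,z$ up to $\Di^{*}\vf\in y$ and then to $\Di^{*}\Di^{*}\vf\in x$ via $x\,S\,y$), and contains $R_\frF$ (from $\Di_i\vf\imp\Di^{*}\vf$ together with the defining property $x R_i y\Imp(\vf\in y\Imp\Di_i\vf\in x)$ of the canonical relations). Since $R^{*}_\frF$ is the least reflexive transitive relation containing $R_\frF$, it is contained in $S$.

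The converse inclusion $S\subseteq R^{\leq m}_\frF$ is the heart of the matter. I would first dualize the hypothesis: $x\,S\,y$ iff $\{\vf\mid\Box^{*}\vf\in x\}\subseteq y$, and since $\Di^{*}=\bigvee_{l\leq m}\Di^{l}$ we have $\Box^{*}\vf=\bigwedge_{l\leq m}\Box^{l}\vf$ (writing $\Box^{l}\vf$ for $\neg\Di^{l}\neg\vf$). The key lemma is a definability statement for each finite power, proved by induction on $l$: for all $x,y$, $x R^{l}_\frF y\iff\{\vf\mid\Box^{l}\vf\in x\}\subseteq y$. The forward direction is the usual truth-lemma computation along a path; the reverse direction is the existence-lemma step and is where the work lies. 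The base case $l=1$ needs a finite disjunction over the $n$ modalities to turn $\Box^{1}=\bigwedge_{i}\Box_i$ into a single $R_i$-step. At the successor step I would build the required intermediate point by a Lindenbaum argument: assuming $\{\vf\mid\Box^{l+1}\vf\in x\}\subseteq y$, I would show that $\{\vf\mid\Box^{1}\vf\in x\}\cup\{\Di^{l}\theta\mid\theta\in y\}$ is $\vL$-consistent — if a finite inconsistency existed, collecting the $\theta$'s into one conjunction $\theta\in y$ and the premises into one $\chi$ with $\Box^{1}\chi\in x$ would give $\vL\vd\chi\imp\Box^{l}\neg\theta$, hence $\Box^{l+1}\neg\theta=\Box^{1}\Box^{l}\neg\theta\in x$, hence $\neg\theta\in y$, contradicting $\theta\in y$ — and any maximal extension $z$ then satisfies $x R_\frF z$ (base case) and $z R^{l}_\frF y$ (induction hypothesis), so $x R^{l+1}_\frF y$.

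Having the lemma, I would finish as follows. For fixed $x$, the hypothesis $x\,S\,y$ says $\bigcap_{l\leq m}\{\vf\mid\Box^{l}\vf\in x\}\subseteq y$; a short finite-disjunction argument (if each of the $m+1$ sets failed to be contained in $y$, the disjunction of the witnessing formulas would lie in the intersection but not in $y$) upgrades this to $\{\vf\mid\Box^{l}\vf\in x\}\subseteq y$ for some single $l\leq m$, whence $x R^{l}_\frF y$ by the lemma, and so $x R^{\leq m}_\frF y$, hence $x R^{*}_\frF y$ since $R^{\leq m}_\frF\subseteq R^{*}_\frF$. Combining the two inclusions gives $R^{*}_\frF\subseteq S\subseteq R^{\leq m}_\frF\subseteq R^{*}_\frF$, so all three coincide and the proposition follows (as a bonus, $R^{*}_\frF=R^{\leq m}_\frF$ shows the canonical frame is itself $m$-transitive). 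I expect the successor step of the induction — getting the $R_\frF=\bigcup_{i}R_i$ bookkeeping right so that one intermediate point works for the union relation, and squeezing the contradiction through the membership $\Box^{l+1}\neg\theta\in x$ — to be the main obstacle; the rest is assembling standard canonical-model facts.
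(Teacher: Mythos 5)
Your proof is correct and is essentially the argument the paper itself relies on: the paper gives no proof beyond a pointer to the standard canonical-model facts (Proposition 5.9 and Theorem 5.16 of Chagrov--Zakharyaschev), and your reconstruction --- the easy inclusion $R^*_\frF\subseteq S$ from reflexivity, transitivity, and $R_\frF\subseteq S$, plus the hard inclusion $S\subseteq R^{\leq m}_\frF$ via the per-power existence lemma proved by a Lindenbaum consistency argument --- is exactly that standard route. The two finite-disjunction tricks (over the $n$ modalities in the base case, and over the powers $l\leq m$ at the end) are precisely the steps needed, so there is no gap.
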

The proof is straightforward; for details see, e.g., Proposition 5.9 and Theorem 5.16 in \cite{Ch:Za:ML:1997}.

Consider a frame $\frF$ and its subset $V$. We say that $x\in V$ is a {\em maximal element} of $V$,
if for all $y\in V$,  $xR^*_\frF y$ implies $y R^*_\frF x$.

It is known that in canonical transitive frames every non-empty definable subset has a maximal element \cite{Fine85};
the next proposition shows that this property holds in the pretransitive case as well.
\begin{proposition}[Maximality lemma]\label{prop:max}
Suppose that $\frF$ is the  $k$-canonical frame of a pretransitive $\vL$, $k\leq \omega$.
Let $\vf\in x$ for some $x$ in $\frF$ and  some formula $\vf$. Then $R^*_\frF(x)\cap\{y\mid \vf\in y\}$
has a maximal element.
\end{proposition}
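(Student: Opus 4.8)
The plan is to apply Zorn's lemma to the set $V=R^*_\frF(x)\cap\{y\mid \vf\in y\}$, preordered by $R^*_\frF$. First I record the two facts that drive the argument. Reading Proposition \ref{prop:canon-Di-R} contrapositively and using the duality $\Box^*=\neg\Di^*\neg$, for points $y,z$ of $\frF$ one gets the criterion
$$y R^*_\frF z \iff \{\psi\mid \Box^*\psi\in y\}\subseteq z.$$
Moreover, by Proposition \ref{prop:fragmS4} the logic $\vL$ proves $\vf^{[*]}$ for every $\LS{4}$-theorem $\vf$; in particular it proves the $[*]$-translates of $\Box p\imp p$, of $\Box p\imp\Box\Box p$, and of the normality principle $\Box p\wedge\Box q\imp\Box(p\wedge q)$. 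Hence $\Box^*$ behaves in $\vL$ like an $\LS{4}$-box and $R^*_\frF$ is a preorder. Since $x R^*_\frF x$ and $\vf\in x$, we have $x\in V$, so $V\neq\emp$.

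A maximal element of $V$ in the sense of the statement is exactly an element that is maximal in the preorder $R^*_\frF\restr V$, so I pass to the quotient poset $V/{\sim_\frF}$ and verify the hypothesis of Zorn's lemma there: every nonempty chain of $V$ has an upper bound in $V$ (an upper bound in $V$ projects to one in the quotient, and a maximal class lifts to a maximal element in the stated sense). Fix a chain $\mathcal C\subseteq V$ and consider the set of formulas
$$\Delta=\{\vf\}\cup\bigcup_{y\in\mathcal C}\{\psi\mid\Box^*\psi\in y\}.$$
Any maximal $\vL$-consistent extension $u$ of $\Delta$ exists by Lindenbaum and is a point of $\frF$ since $\Delta$ consists of $k$-formulas. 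By the criterion above, $y R^*_\frF u$ for every $y\in\mathcal C$, and with $\vf\in u$ and transitivity this gives $x R^*_\frF u$, so $u\in V$ is the required upper bound. Thus everything reduces to the consistency of $\Delta$.

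The crux is therefore to show that $\Delta$ is consistent, and this is where pretransitivity is essential. Suppose not; then $\vL\vd\neg(\vf\wedge\psi_1\wedge\dots\wedge\psi_r)$ for finitely many $\psi_j$ with $\Box^*\psi_j\in y_j$ and $y_j\in\mathcal C$. Since $\mathcal C$ is a chain, among the finitely many $y_j$ there is one, say $y^*$, with $y_j R^*_\frF y^*$ for all $j$. Using the $4$-axiom $\Box^*\psi_j\imp\Box^*\Box^*\psi_j$ we obtain $\Box^*\Box^*\psi_j\in y_j$, and then the criterion $y_j R^*_\frF y^*$ yields $\Box^*\psi_j\in y^*$ for every $j$; by normality $\Box^*(\psi_1\wedge\dots\wedge\psi_r)\in y^*$, and the reflexivity axiom $\Box^*\chi\imp\chi$ gives $\psi_1\wedge\dots\wedge\psi_r\in y^*$. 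As $y^*\in V$ we also have $\vf\in y^*$, so $\vf\wedge\psi_1\wedge\dots\wedge\psi_r\in y^*$, contradicting the consistency of $y^*$ together with the derived theorem. Hence $\Delta$ is consistent, Zorn's lemma applies, and $V$ has a maximal element.

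The main obstacle is precisely this consistency claim: it requires collapsing the potentially unboundedly many formulas forced upward along $\mathcal C$ onto a single point $y^*$, which succeeds only because every inconsistency is witnessed by finitely many chain members and because $\Box^*$ satisfies the $\LS{4}$ axioms in $\vL$, so that $\Box^*\psi_j$ at $y_j$ propagates to $\Box^*\psi_j$ at the top $y^*$. Without pretransitivity — that is, without $\Di^*$ being expressible — neither the preorder property of $R^*_\frF$ nor this propagation would be available, which is exactly why the hypothesis is imposed.
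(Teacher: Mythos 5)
Your proof is correct, and it shares the paper's overall skeleton --- Zorn's lemma applied to $V=R^*_\frF(x)\cap\val{\vf}$ preordered by $R^*_\frF$, with all the weight on showing that every nonempty chain has an upper bound in $V$ --- but you realize that key step by a genuinely different (more syntactic) route. The paper argues topologically: by Proposition \ref{prop:canon-Di-R}, each set $R^*_\frF(y)\cap\val{\vf}$ for $y$ in the chain is closed in the Stone topology on $\frF$, the family has the finite intersection property (witnessed by the top element of any finite subchain), so compactness of the Stone space makes the total intersection nonempty, and any point in it is the desired upper bound. You instead form the theory $\Delta=\{\vf\}\cup\bigcup_{y\in\mathcal{C}}\{\psi\mid\Box^*\psi\in y\}$, prove it $\vL$-consistent by a finitary argument, and apply Lindenbaum's lemma; your finite-inconsistency witness $y^*$ (the top of the finitely many relevant chain members) is exactly the paper's finite-intersection witness. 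The syntactic version avoids topology entirely, at the cost of invoking the translated $\LS{4}$-axioms from Proposition \ref{prop:fragmS4}; the topological version gets the finite intersection property for free, since the chain elements themselves witness it. One local remark: your detour through $\Box^*\psi_j\imp\Box^*\Box^*\psi_j$, the criterion, and then $\Box^*\chi\imp\chi$ is correct but unnecessary --- a single application of your own criterion $y_j R^*_\frF y^* \iff \{\psi\mid\Box^*\psi\in y_j\}\subseteq y^*$ already gives $\psi_j\in y^*$, which is all you need; so your closing claim that the $4$-axiom propagation is what makes the argument work overstates matters (pretransitivity is needed only so that $\Di^*$ is expressible and Proposition \ref{prop:canon-Di-R} is available at all). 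Your handling of the preorder-versus-poset issue for Zorn, by passing to the quotient $V/{\sim_\frF}$, is fine and in fact slightly more careful than the paper's direct appeal to Zorn's lemma.
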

\begin{proof}
For a formula $\alpha$, put $\val{\alpha}=\{y\mid \alpha\in y\}$.
Since $\vf\in x$, $R^*_\frF(x)\cap \val{\vf}$ is non-empty.

Let $\Sigma$ be an $R^*_\frF$-chain in $R^*_\frF(x)\cap \val{\vf}$.
The family
$\{R^*_\frF(y) \cap \val{\vf}\mid  y\in\Sigma \}$
has the finite intersection property
  (indeed, if $\Sigma_0$ is a non-empty finite subset of $\Sigma$, then for some $y_0\in \Sigma_0$ we have $yR^*_\frF y_0$ for all $y\in \Sigma_0$;
  so $y_0\in R^*_\frF(y) \cap \val{\vf}$ for all $y\in\Sigma_0$).
By Proposition \ref{prop:canon-Di-R}, $R_\frF^*(y)=\bigcap\{\val{\alpha}\mid \Box^*\alpha\in y\}$. It follows that all sets $R^*_\frF(y) \cap \val{\vf}$ are closed in the
Stone topology on $\frF$ (see, e.g., \cite[Theorem 1.9.4]{gold:math-mod93}). By the compactness,
$\bigcap \{R^*_\frF(y) \cap \val{\vf}\mid y\in\Sigma\}$ is non-empty. Thus $\Sigma$ has an upper bound in $\val{\vf}$.
By Zorn's lemma, $R^*_\frF(x)\cap \val{\vf}$  contains a maximal element.
\end{proof}

\begin{proposition}\label{prop:cons}
A pretransitive logics $\vL$ is consistent iff $\vL[1]$  is consistent.
\end{proposition}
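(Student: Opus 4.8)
The plan is to treat the two implications separately. One is immediate: since $\vL\subseteq\vL[1]$, if $\vL$ is inconsistent then $\bot\in\vL\subseteq\vL[1]$, so $\vL[1]$ is inconsistent; contrapositively, consistency of $\vL[1]$ yields consistency of $\vL$. The substance is the converse, for which I would build an explicit model of $\vL[1]=\vL+B_1$ out of the canonical frame of $\vL$. Recall that here $B_1$ is (equivalent to) $p\imp\Box^*\Di^* p$, and that a pretransitive frame validates $B_1$ exactly when its height is at most $1$.

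Assume $\vL$ is consistent and let $\frF$ and $\mM$ be its $\omega$-canonical frame and model; by the Canonical model theorem $\mM\mo\vL$ and $\frF$ is non-empty. Fix any point $x_0$ of $\frF$. Since $\top\in x_0$, Proposition \ref{prop:max} (applied with $\vf=\top$) provides a maximal element $x$ of $R^*_\frF(x_0)$. I would then show that the cone $\cone{\frF}{x}$ is a single cluster: for every $y\in R^*_\frF(x)$ we have $x_0\,R^*_\frF\,x\,R^*_\frF\,y$, hence $y\in R^*_\frF(x_0)$, and maximality of $x$ gives $y\,R^*_\frF\,x$; thus $x\sim_\frF y$. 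Consequently $\h(\cone{\frF}{x})=1$, so, the cone being pretransitive ($m$-transitivity of the canonical frame is inherited by cones), $\cone{\frF}{x}\mo B_1$.

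To finish, the generated submodel $\cone{\mM}{x}$ validates $\vL$, because truth is preserved under passing to generated submodels and $\mM\mo\vL$; and its underlying frame validates $B_1$. From this I would conclude $\cone{\mM}{x}\mo\vL[1]$, whence $\bot\notin\vL[1]$ since $\bot$ is false everywhere in a non-empty model. The one step that deserves care — and which I expect to be the main obstacle — is precisely the passage from ``$\cone{\mM}{x}\mo\vL$ and $\cone{\frF}{x}\mo B_1$'' to ``$\cone{\mM}{x}\mo\vL[1]$''. Global truth in a fixed model is \emph{not} closed under Substitution, so one cannot simply invoke the normal closure of $\vL\cup\{B_1\}$. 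The point is that $B_1$ holds on the \emph{frame} $\cone{\frF}{x}$, and frame validity \emph{is} closed under substitution; hence every substitution instance of $B_1$ is globally true in $\cone{\mM}{x}$. Since the set of formulas globally true in $\cone{\mM}{x}$ contains all classical tautologies, the axioms $\Di_i(p\vee q)\imp\Di_i p\vee\Di_i q$ and $\neg\Di_i\bot$, contains $\vL$, contains every substitution instance of $B_1$, and is closed under Modus Ponens and Monotonicity, it contains the whole logic $\vL+B_1=\vL[1]$ (substitution having been pushed to the axioms).
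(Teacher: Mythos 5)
Your proof is correct, but it takes a genuinely different route from the paper's. The paper disposes of the nontrivial direction in two lines: by Proposition \ref{prop:fragmS4} the set $\{\vf \mid \vL \vd \vf^{[*]}\}$ is a (consistent, since $\bot^{[*]}=\bot$) logic containing $\LS{4}$, and it then invokes the known fact that a consistent logic containing $\LS{4}$ stays consistent when the axiom $p\imp\Box\Di p$ is added, transferring this back to $\vL[1]=\vL+(p\imp\Box^*\Di^*p)$. You instead give a direct, self-contained semantic construction: inside the $\omega$-canonical model of $\vL$ you use the Maximality lemma (Proposition \ref{prop:max}) with $\vf=\top$ to find a point whose cone is a single final cluster --- exactly the observation the paper records separately as Proposition \ref{prop:1heavy} --- and exhibit the generated submodel as a nonempty model of $\vL[1]$. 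Your handling of the substitution subtlety is the right one and is a point the paper never spells out: global truth in a model is not substitution-closed, but frame validity of $B_1$ on the cone is, and $\vL[1]$ is the closure under Modus Ponens and Monotonicity of a substitution-closed axiom set. What the paper's route buys is brevity and reuse of a classical result (whose standard proof is, in essence, your maximal-cluster construction); what your route buys is independence from that external fact and an explicit model-theoretic picture that prefigures the second half of the proof of Theorem \ref{thm:embedd}. One step you assert rather than justify: that the canonical frame of an $m$-transitive logic is itself $m$-transitive, so that its cones are $m$-transitive and $\Di^*$ matches $R^*_\frF$ there. This is true and standard (the pretransitivity axiom is Sahlqvist, hence canonical), so it is not a gap, but it deserves a citation; alternatively you can bypass frames entirely by deriving global truth of every instance $\psi\imp\Box^*\Di^*\psi$ in the generated submodel directly from Proposition \ref{prop:canon-Di-R}, since every point of the cone is $R^*_\frF$-accessible from every other.
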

\begin{proof}
Easily follows from Proposition \ref{prop:fragmS4} and the fact that if
a logic containing $\LS{4}$ is consistent, then its extension with the formula $p\imp \Box\Di p$ is consistent.
\end{proof}
Since $\vL[1]\supseteq\vL[2]\supseteq\vL[3]\supseteq\ldots $, it follows that if $\vL$ is consistent,  then $\vL[h]$
is consistent for any $h>0$.

\smallskip

For a frame $\frF$ and a point $x$ is $\frF$,
the {\em depth of $x$ in $\frF$} is the height of the frame $\cone{\frF}{x}$. Let  $\frFh$ denote the restriction of $\frF$ onto the set of its points
of depth less than or equal to $h$, i.e., $\frFh\;=\;{\frF\restr \{x  \mid \h(\cone{\frF}{x})\leq h\}}$.

\begin{proposition}\label{prop:canon-pointwise}
Let $\frF$ be the $k$-canonical frame of a pretransitive logic $\vL$, $k\leq \omega$.
\begin{enumerate}
\item
For all $x$ in $\frF$, $0\leq h<\omega$,
\[
\text{ the depth of $x$ in $\frF$ is $\leq h$}
\;\tiff\;
B_h(\psi_1,\ldots,\psi_h)\in x \textrm{ for all $k$-formulas } \psi_1,\ldots,\psi_h.
\]
\item
For  $0<h< \omega$, the frame $\frFh$ is the canonical frame of $\vL[h]$.
\end{enumerate}
\end{proposition}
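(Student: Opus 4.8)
The plan is to prove Part 1 by induction on $h$, establishing the stated equivalence simultaneously at every point of $\frF$, and then to derive Part 2 from Part 1 together with the observation that the set of points of depth $\le h$ is closed under $R^*_\frF$. Throughout I use that the $k$-canonical frame $\frF$ of an $m$-transitive $\vL$ is itself $m$-transitive, so that $\Di^*$ and $\Box^*$ compute $R^*_\frF$ both in $\frF$ and in each cone $\cone{\frF}{x}$, the latter being a generated subframe; hence the height criterion recalled before the definition of finite height applies and gives $\cone{\frF}{x}\mo B_i \tiff \h(\cone{\frF}{x})\le i$. The forward direction of Part 1 is then immediate and needs no induction: if $\h(\cone{\frF}{x})\le h$, then $\cone{\frF}{x}\mo B_h$ under all valuations, so by the substitution lemma applied to the valuation induced by $k$-formulas $\psi_1,\dots,\psi_h$ (and preservation of truth in the generated subframe $\cone{\frF}{x}$) we get $\mM,x\mo B_h(\psi_1,\dots,\psi_h)$, i.e. $B_h(\psi_1,\dots,\psi_h)\in x$.

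The converse is the heart of the argument, and I prove its contrapositive by induction on $h$: if $\h(\cone{\frF}{x})>h$, then $B_h(\psi_1,\dots,\psi_h)\notin x$ for suitable $k$-formulas. The base case $h=0$ holds since $B_0=\bot\notin x$. For the step, assume $\h(\cone{\frF}{x})>h+1$; choosing a chain of clusters of length $h+2$ in $\cone{\frF}{x}$ and, if necessary, prepending the cluster of $x$, I may take it to be $D_0<_\frF D_1<_\frF\dots<_\frF D_{h+1}$ with $x\in D_0$. Pick $v\in D_1$; then $\cone{\frF}{v}$ contains the chain from $D_1$ upward, so $\h(\cone{\frF}{v})>h$, and the induction hypothesis supplies $k$-formulas $\psi_1,\dots,\psi_h$ with $\mM,v\not\mo B_h(\psi_1,\dots,\psi_h)$. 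It remains to produce $\psi_{h+1}$ with $\mM,x\mo\psi_{h+1}$ while $\mM,v\not\mo\Di^*\psi_{h+1}$: since $D_0<_\frF D_1$ strictly, $v$ does not reach $x$, so $x\notin R^*_\frF(v)$, and Proposition \ref{prop:canon-Di-R} yields a $k$-formula $\psi_{h+1}\in x$ with $\Di^*\psi_{h+1}\notin v$. Unwinding $B_{h+1}=p_{h+1}\to\Box^*(\Di^* p_{h+1}\lor B_h)$: at $x$ the antecedent holds, while $v\in R^*_\frF(x)$ refutes the consequent (as $v\not\mo\Di^*\psi_{h+1}$ and $v\not\mo B_h$), so $B_{h+1}(\psi_1,\dots,\psi_{h+1})\notin x$. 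The main obstacle is precisely this separation step, namely exhibiting one definable formula true at $x$ yet false throughout $\cone{\frF}{v}$; Proposition \ref{prop:canon-Di-R} (or, alternatively, the Maximality lemma) is exactly what makes it available in the canonical setting.

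For Part 2, note first that $V=\{x\mid \h(\cone{\frF}{x})\le h\}$ is closed under $R^*_\frF$, since a successor has a smaller cone; thus $\frFh=\frF\restr V$ is a generated subframe of the canonical model, truth is preserved, and $\h(\frFh)\le h$. It then suffices to identify $V$ with the set of maximal $\vL[h]$-consistent sets of $k$-formulas, the relations and valuation agreeing by their very definition on the common domain. One inclusion is clear: a maximal $\vL[h]$-consistent set is a maximal $\vL$-consistent set containing every $k$-instance of $B_h$, hence lies in $V$ by Part 1. For the converse, take $x\in V$; by Part 1 every $k$-instance of $B_h$ belongs to $x$, and since $V$ is upward closed the same holds at every point of $R^*_\frF(x)$. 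Given a $k$-formula theorem $\theta$ of $\vL[h]$, the modal deduction theorem (after collapsing the auxiliary variables by substitution) presents $\theta$ as an $\vL$-consequence of finitely many boxed $k$-instances of $B_h$; as all these instances hold throughout $R^*_\frF(x)$, the corresponding boxed conjunction holds at $x$, whence $\theta\in x$. Therefore $x$ contains all $k$-formula theorems of $\vL[h]$ and is maximal $\vL[h]$-consistent, which completes the identification and hence the proof.
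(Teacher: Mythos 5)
Your proposal is correct and follows essentially the same route as the paper: Part 1's forward direction via validity of $B_h$ at points of small depth plus the Canonical model theorem, the converse by contrapositive induction on $h$ using Proposition \ref{prop:canon-Di-R} to separate $x$ from a deeper, strictly higher point, and Part 2 by identifying $\frFh$ with the canonical frame of $\vL[h]$ as a generated subframe. The only difference is that you spell out details the paper leaves implicit — the cluster-chain construction of the witness $v$, and the $\Box^*$-deduction-theorem argument showing that every point of depth $\leq h$ contains all $k$-formula theorems of $\vL[h]$ — and both of these fillings are correct.
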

\begin{proof}
1. If $\h(\cone{\frF}{x})\leq h$, then $B_h$ is valid at $x$ in $\frF$; by the Canonical model theorem, $B_h(\psi_1,\ldots,\psi_h)\in x$ for all $k$-formulas $\psi_1,\ldots,\psi_h$.

By induction on $h$, let us show that if $\h(\cone{\frF}{x})>h$, then $B_h(\psi_1,\ldots,\psi_h)\notin x$ for some  $\psi_1,\ldots,\psi_h$.
The basis is trivial, since there are no points containing $B_0=\bot$ in $\frF$.
Suppose $\h(\cone{\frF}{x})>h+1$. Then there exists $y$ such that $\h(\cone{\frF}{y})>h$,
$(x,y)\in R_\frF^*$, and $(y,x)\notin R_\frF^*$.  By induction
hypothesis, $B_h(\psi_1,\ldots,\psi_h)\notin y$ for some  $\psi_1,\ldots,\psi_h$. By Proposition \ref{prop:canon-Di-R}, for some $\psi_{h+1}$ we have
$\psi_{h+1}\in x$ and $\Di^*\psi_{h+1}\notin y$. It follows that $B_{h+1}(\psi_1,\ldots,\psi_{h+1})\notin x$.

\medskip 2. Since $\vL\subseteq \vL[h]$, the $k$-canonical frame of $\vL[h]$ is a generated subframe of $\frF$.
Now the statement follows from the first statement of the proposition.
\end{proof}

A logic $\vL$ is {\em $k$-canonical} if it is valid in its $k$-canonical frame.
\begin{proposition}
If a pretransitive $\vL$ is $k$-canonical ($k\leq \omega$), then  $\vL[h]$ is $k$-canonical for all $0<h<\omega$.
\end{proposition}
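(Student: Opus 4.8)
The plan is to invoke Proposition~\ref{prop:canon-pointwise}(2), which tells us that the $k$-canonical frame of $\vL[h]$ is precisely the subframe $\frFh$ of the $k$-canonical frame $\frF$ of $\vL$. Saying that $\vL[h]$ is $k$-canonical means exactly $\frFh\mo\vL[h]$, so the whole task reduces to checking that $\frFh$ validates both $\vL$ and the height axiom $B_h$.

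First I would establish that $\frFh$ is a generated subframe of $\frF$. The key observation is that depth is monotone along $R^*_\frF$: if $x R^*_\frF y$, then $R^*_\frF(y)\subseteq R^*_\frF(x)$ by transitivity of $R^*_\frF$, and since any $R^*_\frF$-chain inside $R^*_\frF(y)$ is also one inside $R^*_\frF(x)$, we get $\h(\cone{\frF}{y})\leq\h(\cone{\frF}{x})$. Hence the domain $\{x\mid \h(\cone{\frF}{x})\leq h\}$ of $\frFh$ is upward closed under $R^*_\frF$, which is exactly the condition for $\frFh$ to be a generated subframe of $\frF$. Since validity of modal formulas is preserved in generated subframes and $\frF\mo\vL$ by the assumption that $\vL$ is $k$-canonical, we obtain $\frFh\mo\vL$.

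Next I would check $\frFh\mo B_h$. Because $\frFh$ is generated, $\cone{\frFh}{x}=\cone{\frF}{x}$ for every point $x$ of $\frFh$, so the depth of $x$ measured in $\frFh$ equals its depth in $\frF$, which is $\leq h$ by construction. As any chain of length $\ell$ in the skeleton of $\frFh$ starts at a point of depth at least $\ell$, this forces $\h(\frFh)\leq h$; by the characterization $\frFh\mo B_h \tiff \h(\frFh)\leq h$ recorded just before the definition of $\vL[h]$, we conclude $\frFh\mo B_h$.

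Finally, the set of formulas valid in $\frFh$ is a normal logic containing $\vL\cup\{B_h\}$, hence it contains $\vL[h]=\vL+B_h$; that is, $\frFh\mo\vL[h]$, as required. The only genuinely substantive step is the generated-subframe observation in the second paragraph: once $\frFh$ is known to be generated in $\frF$, both the transfer of $\vL$ and the bound on $\h(\frFh)$ are routine, and the closing move is the standard fact that the formulas valid in a frame form a normal logic.
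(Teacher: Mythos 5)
Your proof is correct and follows exactly the route the paper intends: the paper's own proof is just the one-line remark that the statement ``follows from Proposition~\ref{prop:canon-pointwise}'', and your argument is precisely the fleshed-out version of that remark (identify the $k$-canonical frame of $\vL[h]$ with $\frFh$, note it is a generated subframe of $\frF$, and transfer $\vL$ and $B_h$ to it). The additional details you supply --- monotonicity of depth along $R^*_\frF$, the height bound on $\frFh$, and the closing normal-logic observation --- are exactly the routine steps the paper leaves implicit.
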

\begin{proof}
Follows from Proposition \ref{prop:canon-pointwise}.
\end{proof}

\begin{theorem}\label{thm:embedd}
Let  $\vL$ be a pretransitive logic. Then for all formulas $\vf,\psi$ we have
$$
\vL[1] \vd \Box^* \psi\imp \Box^* \vf \tiff \vL  \vd \Di^*\Box^*\psi\imp \Di^*\Box^* \vf.$$
\end{theorem}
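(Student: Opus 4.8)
The plan is to read both sides of the equivalence off the $\omega$-canonical model $\mM$ of $\vL$, with underlying frame $\frF$. If $\vL$ is inconsistent both sides hold trivially, so assume $\vL$ consistent; then $\vL[1]$ is consistent by Proposition \ref{prop:cons}. By the Canonical model theorem, $\vL\vd\Di^*\Box^*\psi\imp\Di^*\Box^*\vf$ holds iff $\Di^*\Box^*\psi\imp\Di^*\Box^*\vf$ is true at every point of $\mM$. By Proposition \ref{prop:canon-pointwise}(2) the canonical model of $\vL[1]$ is the restriction of $\mM$ to its points of depth $\leq 1$, a generated submodel; hence $\vL[1]\vd\Box^*\psi\imp\Box^*\vf$ holds iff $\Box^*\psi\imp\Box^*\vf$ is true at every point of $\mM$ of depth $1$. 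So it suffices to prove that $\Box^*\psi\imp\Box^*\vf$ is true at all depth-$1$ points of $\mM$ iff $\Di^*\Box^*\psi\imp\Di^*\Box^*\vf$ is true at all points of $\mM$.

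The observation driving both directions is that at a point $z$ of depth $1$ the cone $\cone{\frF}{z}$ is a single cluster, so every $w\in R^*_\frF(z)$ satisfies $R^*_\frF(w)=R^*_\frF(z)$; consequently, for every $\chi$, $\mM,z\mo\Di^*\Box^*\chi\iff\mM,z\mo\Box^*\chi$ (forward, because any witness $w$ has the same cone as $z$; backward, by taking $w=z$). The direction $\Leftarrow$ is then immediate: if $\Di^*\Box^*\psi\imp\Di^*\Box^*\vf$ is true throughout $\mM$, then at a depth-$1$ point it collapses to $\Box^*\psi\imp\Box^*\vf$.

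For the direction $\Rightarrow$, assume $\Box^*\psi\imp\Box^*\vf$ holds at every depth-$1$ point, take any $x$ with $\mM,x\mo\Di^*\Box^*\psi$, and pick $y\in R^*_\frF(x)$ with $\Box^*\psi\in y$. Since $\Box^*\Box^*\psi\lra\Box^*\psi\in\vL$ by Proposition \ref{prop:fragmS4}, we get $\Box^*\psi\in w$ for all $w\in R^*_\frF(y)$, so $R^*_\frF(y)\subseteq\val{\Box^*\psi}$ and $R^*_\frF(y)\cap\val{\Box^*\psi}=R^*_\frF(y)$. Now I apply the Maximality lemma (Proposition \ref{prop:max}) to $y$ and $\Box^*\psi$, obtaining a maximal element $z$ of this set with $\Box^*\psi\in z$. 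The crux is that $z$ has depth $1$: as $z\in R^*_\frF(y)$ we have $R^*_\frF(z)\subseteq R^*_\frF(y)$, so every $w$ with $zR^*_\frF w$ lies in $R^*_\frF(y)$ and hence, by maximality of $z$, satisfies $wR^*_\frF z$; thus $\cone{\frF}{z}$ is a cluster and $\h(\cone{\frF}{z})=1$. Applying the depth-$1$ hypothesis at $z$ (where $\Box^*\psi\in z$) gives $\Box^*\vf\in z$, and $xR^*_\frF z$ then yields $\mM,x\mo\Di^*\Box^*\vf$. The main obstacle is exactly this step: producing a maximal, depth-$1$ witness above $y$, which is what forces the combined use of the Maximality lemma and the $\LS{4}$ character of $\Box^*$; a witness merely satisfying $\Box^*\psi$ somewhere would not sit at the top cluster, where alone the height-$1$ axiom has any force.
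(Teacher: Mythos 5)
Your proof is correct and follows essentially the same route as the paper's: for the hard direction you use the Maximality lemma (Proposition \ref{prop:max}) to produce a maximal, hence depth-1, witness above $y$, which by Proposition \ref{prop:canon-pointwise} is a point of the canonical frame of $\vL[1]$ and therefore contains $\Box^*\psi\imp\Box^*\vf$ --- exactly the paper's argument. The only cosmetic difference is in the easy direction, where you argue semantically that the cone of a depth-1 point is a single cluster, while the paper argues syntactically that the $*$-fragment of $\vL[1]$ contains $\LS{5}$; these are the same fact in two guises.
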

\begin{proof} By Proposition \ref{prop:cons}, we may assume that both $\vL$ and $\vL[1]$ are consistent.
Let  $\frF$ be the $\omega$-canonical frame of $\vL$, and $\frG$  the $\omega$-canonical frame of $\vL[1]$.

Suppose $\vL  \vd \Di^*\Box^* \psi\imp \Di^*\Box^* \vf$.
Consider an element $x$ of $\frG$.
By Proposition \ref{prop:fragmS4}, ${\{\vf\mid \vL[1]\vd \vf^{[*]} \}}$ is a logic containing $\LS{5}$.
Thus $x$ contains formulas $\Box^* \psi \imp   \Di^* \Box^* \psi$ and  $\Di^* \Box^* \vf\imp \Box^* \vf$.
Since $\vL\subseteq \vL[1]$,  $x$ also contains  $\Di^*\Box^* \psi\imp \Di^*\Box^* \vf$.
It follows that if $x$ contains $\Box^* \psi$, then $x$ contains $\Box^* \vf$.
By the Canonical model theorem,
$\vL[1]\vd \Box^* \psi\imp \Box^* \vf$.

Now suppose
$\vL[1] \vd \Box^* \psi\imp \Box^* \vf$.  Assume that  $\Di^*\Box^* \psi\in x$ for some element $x$ of $\frF$.
Then for some $y$ we have $\Box^* \psi \in y$ and $x R_\frF^* y$.
The set $R^*_\frF(y)$ has a maximal element $z$ by Proposition \ref{prop:max}. It follows that $\h(\cone{\frF}{z})=1$.
By Proposition \ref{prop:canon-pointwise}, $\frG=\frametop{\frF}{1}$. Thus $z$ is in $\frG$ and hence $\Box^*\psi\imp  \Box^* \vf$ is in $z$.
  Since $yR^*_\frF z$, we have $\Box^* \psi \in z$, which implies that  $\Box^* \vf \in z$.
Hence $\Di^*\Box^* \vf$ is in $x$.
It follows that  $\vL  \vd \Di^*\Box^* \psi\imp \Di^*\Box^* \vf$.
\end{proof}

\begin{theorem}\label{thm:embedd-corr}
Let $\vL$ be a pretransitive logic.
\begin{enumerate}
\item
For all $\vf$, we have
$\vL[1]\vd \vf$  iff $\vL \vd \Di^*\Box^*\vf.$
\item
If $\vL$ is decidable, then so is $\vL[1]$.
\item
If $\vL$ has the finite model property, then so does $\vL[1]$.
\end{enumerate}
\end{theorem}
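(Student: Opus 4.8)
The plan is to derive all three statements from the preceding Theorem~\ref{thm:embedd}, where the substantive work (via the Maximality lemma) already lies.

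For statement~1, I would first record that $\vL[1]\vd\vf$ iff $\vL[1]\vd\Box^*\vf$. The left-to-right direction is iterated necessitation: if $\vf$ is a theorem, so is each $\Box^i\vf$, hence so is their conjunction $\Box^*\vf$. The converse holds because the $i=0$ conjunct of $\Box^*\vf=\bigwedge_{i\le m}\neg\Di^i\neg\vf$ is $\neg\neg\vf$, so that $\Box^*\vf\imp\vf$ is a classical tautology. I would then specialize Theorem~\ref{thm:embedd} to $\psi=\top$. Since $\Box^*\top$, and therefore $\Di^*\Box^*\top$, are theorems of every normal logic, the antecedents $\Box^*\top$ and $\Di^*\Box^*\top$ drop out and the theorem collapses to
\[
\vL[1]\vd\Box^*\vf \iff \vL\vd\Di^*\Box^*\vf .
\]
Chaining this with the previous equivalence yields $\vL[1]\vd\vf \iff \vL[1]\vd\Box^*\vf \iff \vL\vd\Di^*\Box^*\vf$, which is statement~1.

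Statement~2 is then immediate. The number $m$ witnessing pretransitivity of $\vL$ is a fixed constant, so $\vf\mapsto\Di^*\Box^*\vf$ is a computable syntactic transformation. Given a decision procedure for membership in $\vL$, to decide $\vf\in\vL[1]$ I would compute $\Di^*\Box^*\vf$ and feed it to that procedure, appealing to statement~1.

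For statement~3 I would argue by refutation in finite frames. Suppose $\vf\notin\vL[1]$; by statement~1, $\Di^*\Box^*\vf\notin\vL$, so the finite model property of $\vL$ supplies a finite model $(\frF,V)$ with $\frF\mo\vL$ and a point $x_0$ at which $\Di^*\Box^*\vf$ fails. As $\frF$ is finite, its skeleton has a maximal cluster $C$ reachable from $x_0$, so $C\subseteq R^*_\frF(x_0)$. Since $C$ is closed under each $R_i$, the restriction $\frF\restr C$ is a generated subframe, whence $\frF\restr C\mo\vL$; being a single cluster it has height $1$ and so validates $B_1$, giving $\frF\restr C\mo\vL[1]$. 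Finally, the failure of $\Di^*\Box^*\vf$ at $x_0$ means $\Box^*\vf$ fails at every point of $R^*_\frF(x_0)$, in particular at some $y\in C$; then $\vf$ fails at some $z\in R^*_\frF(y)=C$, and since truth is preserved in the generated submodel we get $\frF\restr C,z\not\mo\vf$. Thus $\frF\restr C$ is a finite $\vL[1]$-frame refuting $\vf$.

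The genuine difficulty is already absorbed into Theorem~\ref{thm:embedd}, so these are corollaries. The only points needing care are the $\psi=\top$ specialization in statement~1 and, in statement~3, verifying that the witness $z$ to the failure of $\vf$ lies inside the chosen maximal cluster—which is exactly what the failure of $\Di^*\Box^*\vf$ at a point below $C$ guarantees.
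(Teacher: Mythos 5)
Your proposal is correct and follows essentially the same route as the paper: statement~1 by specializing Theorem~\ref{thm:embedd} to $\psi=\top$ together with the observation that $\vL[1]\vd\vf$ iff $\vL[1]\vd\Box^*\vf$, statement~2 as an immediate consequence, and statement~3 by restricting a finite refuting $\vL$-frame to a maximal cluster, which is a generated subframe of height~$1$. You merely spell out details the paper leaves implicit (e.g., why the witness refuting $\vf$ lies inside the maximal cluster), so there is nothing to correct.
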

\begin{proof}
By Theorem \ref{thm:embedd}, we have
$$
\vL[1] \vd \Box^* \top\imp \Box^* \vf \tiff \vL  \vd \Di^*\Box^*\top\imp \Di^*\Box^* \vf.$$
By Proposition \ref{prop:fragmS4}, we have $\top \lra \Box^* \top$ and $\top \lra  \Di^*\Box^*\top$ in every pretransitive logic;
also, we have {$\Box^* \vf \in\vL[1]$} iff $\vf \in \vL[1]$.
Now the first statement follows.

The second statement is an immediate consequence of the first one.

Suppose $\vL$ has the finite model property. Consider a formula $\vf\notin \vL[1]$. Then $\Di^*\Box^*\vf\notin \vL$. Then $\Di^*\Box^*\vf$ is refuted in some finite $\vL$-frame $\frF$.
If follows that $\vf$ is refuted in $\frF$ at some point in a maximal cluster $C$. The restriction $\frF\restr C$ is a generated subframe of $\frF$.
Thus $\frF\restr C$ refutes $\vf$ and validates $\vL$.
The height of this restriction is $1$, so $\frF\restr C\mo \vL[1]$. Thus $\vL[1]$  has the finite model property.
\end{proof}

\begin{example}
Important examples of pretransitive frames are birelational frames $(W,\leq,R)$ with transitive $R$.
Recall that $(W,\leq,R)$ is a {\em birelational frame}, if $\leq$ is a partial order on $W$, $R\subseteq W^2$, and
$$(R\circ{\leq})\;\subseteq \; ({\leq}\circ R), \quad (R^{-1}\circ{\leq})\; \subseteq \; ({\leq}\circ R^{-1}).$$
Consider the class of all birelational frames $(W,\leq,R)$ with transitive reflexive $R$.
Its modal  logic $\vL$ is the smallest bimodal logic containing the axioms of $\LS{4}$ for modalities $\Box_0,\Box_1$, and the formulas
$\Di_1\Di_0 p\imp \Di_0\Di_1 p$ and $\Di_0\Box_1 p\imp \Box_1\Di_0 p$
\cite{zakharyaschev_many-dimensional_2003}
(recall that in the semantics of modal intuitionistic logic, the logic of this class is known to be $\logicts{IS4}$ \cite{FS}, one of the
\emph{``most prominent logics for which decidability is left open''}
 \cite{SimpsIntModal94}).
In this case, $\Di_0\Di_1$ plays the role of the master modality, and the formula $B_1$ says that ${\leq}\circ R$ is an equivalence.
The decidability and the finite model property of the logic $\vL$, as well as of the logic $\logicts{IS4}$,  is an open question.
By the above theorem, we have
$\vL \vd \Di_0\Di_1\Box_0\Box_1 \vf$ iff $\vL[1]\vd \vf$.
\begin{question*}
Is the logic $\vL[1]$ decidable? Does it have the fmp?
\end{question*}
\end{example}

\section{Translation for logics of arbitrary finite height}\label{sec:main}

In the proof of Theorem \ref{thm:embedd} we used the following property of a canonical frame $\frF$ of $\vL$:
every point in  $\frF$  is  below  (w.r.t. to the preorder $R^*_\frF$) a maximal point;
maximal points form $\frametop{\frF}{1}$, the canonical frame of $\vL[1]$.
To describe translations from $\vL[h]$ to $\vL$ for $h>1$, we shall use the following analog of this property.
\begin{definition}
Let $0<h<\omega$. A frame $\frF$ is said to be {\em $h$-heavy} if for its every element $x$ which is not in $\frFh$
there exists $y$ such that $xR^*_\frF y$ and
$\h(\cone{\frF}{y})=h$.

$\frF$ is said to be {\em top-heavy} if it is $h$-heavy for all positive finite $h$.
\end{definition}

\begin{proposition}\label{prop:1heavy}
The $k$-canonical frame of a consistent pretransitive logic is $1$-heavy for every $k\leq \omega$.
\end{proposition}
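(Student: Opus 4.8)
The plan is to derive the statement directly from the Maximality lemma (Proposition \ref{prop:max}), applied to the trivial formula $\top$. First I would note that the restriction to points $x\notin\frametop{\frF}{1}$ in the definition of $1$-heaviness is inessential: I intend to prove the stronger claim that \emph{every} point $x$ of the canonical frame lies $R^*_\frF$-below some point of depth exactly $1$, which immediately yields $1$-heaviness.

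So let $\frF$ be the $k$-canonical frame of the consistent pretransitive logic $\vL$, and fix any $x$ in $\frF$. Since $\top$ belongs to every maximal $\vL$-consistent set, $\val{\top}=W$, whence $R^*_\frF(x)\cap\val{\top}=R^*_\frF(x)$. Applying Proposition \ref{prop:max} with $\vf=\top$ then produces a maximal element $y$ of $R^*_\frF(x)$; in particular $xR^*_\frF y$. This is the only place where consistency (to guarantee a nonempty frame) and pretransitivity (to invoke the Maximality lemma) are needed.

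The core step is to verify that any such maximal $y$ has depth $1$, i.e.\ $\h(\cone{\frF}{y})=1$. I would argue as follows: for an arbitrary $z\in R^*_\frF(y)$, transitivity of $R^*_\frF$ together with $y\in R^*_\frF(x)$ gives $z\in R^*_\frF(x)$, and we have $yR^*_\frF z$; maximality of $y$ in $R^*_\frF(x)$ then forces $zR^*_\frF y$, so $z\sim_\frF y$. Hence $R^*_\frF(y)$ coincides with the $\sim_\frF$-cluster of $y$, the skeleton of $\cone{\frF}{y}$ is a single point, and its height is $1$, as required.

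I do not anticipate any real obstacle: the entire difficulty is concentrated in the Maximality lemma itself (a compactness argument in the Stone topology on $\frF$), which is already available. Once a maximal point of $R^*_\frF(x)$ is in hand, the remaining work is the routine observation that such a point generates a single cluster, hence a cone of height $1$.
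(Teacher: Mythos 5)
Your proposal is correct and is exactly the paper's argument: the paper's proof consists of the single line ``In the Maximality lemma (Proposition \ref{prop:max}), put $\vf=\top$,'' leaving implicit the routine verification you spell out (that a maximal element of $R^*_\frF(x)$ generates a single cluster and hence a cone of height $1$). Your write-up just makes those implicit details explicit.
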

\begin{proof}
In the Maximality lemma (Proposition \ref{prop:max}), put $\vf=\top$.
\end{proof}

It is known that $k$-canonical frames of unimodal transitive logics are
top-heavy for all finite $k$ (\cite{ShehtRigNish}, \cite{Fine85}, \cite{Bellissima1985}).\footnote{The
 term `top-heavy' was introduced in \cite{Fine85}.}
This can be generalized for the pretransitive case as follows.
\begin{theorem}\label{thm:topheavy}
Let  $\vL$ be a consistent pretransitive logic, $h,k< \omega$. If $\vL[h]$ is $k$-tabular, then:
\begin{enumerate}
\item
For every $i\leq h$,  there exists a formula $\Bik$ such that
$\Bik\in x$ iff   the depth of $x$ in the $k$-canonical frame of $\vL$  is less than or equal to $i$.
\item
The $k$-canonical frame of $\vL$ is $(h+1)$-heavy.
\end{enumerate}
\end{theorem}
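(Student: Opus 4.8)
The plan is to argue by induction on $h$, carrying both conclusions as a joint hypothesis. For $h=0$ the formula $\B_{0,k}=\bot$ witnesses (1), since no point has depth $0$, and (2) is exactly the $1$-heaviness of Proposition \ref{prop:1heavy}. For the inductive step, note first that $\vL[h-1]\supseteq\vL[h]$, so $\vL[h-1]$ is again $k$-tabular; hence the induction hypothesis applies and supplies the formulas $\B_{i,k}$ for $i\le h-1$ together with the $h$-heaviness of the $k$-canonical frame $\frF$ of $\vL$. Moreover, by Proposition \ref{prop:canon-pointwise}(2) the subframe $\frFh$ is the $k$-canonical frame of $\vL[h]$, so by Proposition \ref{prop:LTbasic} its domain $U_h=\{x\mid\h(\cone{\frF}{x})\le h\}$ is finite. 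Thus at step $h$ only one new formula, $\B_{h,k}$ defining $U_h$, must be produced; the rest come from the induction.

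I would look for $\B_{h,k}$ in the shape suggested by $B_h$ itself, as a finite conjunction
\[
\B_{h,k}=\bigwedge_{\alpha\in\Gamma}\bigl(\alpha\imp\Box^{*}(\Di^{*}\alpha\vee\B_{h-1,k})\bigr)
\]
over a suitable finite set $\Gamma$ of $k$-formulas, with $\B_{h-1,k}$ taking over the role of the recursive clause $B_{h-1}$. For any $\Gamma$ one inclusion is immediate from the induction hypothesis: if $\h(\cone{\frF}{x})\le h$ and $\alpha\in x$, then for every successor $u$ of $x$ either $\h(\cone{\frF}{u})\le h-1$, so that $\B_{h-1,k}\in u$, or $\h(\cone{\frF}{u})=h$, which (since $\h(\cone{\frF}{x})\le h$) forces $u$ into the cluster of $x$, whence $uR^*_\frF x$ and so $\Di^{*}\alpha\in u$; in either case the box-formula holds, so $\B_{h,k}\in x$. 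For the converse I would use $h$-heaviness: if $\h(\cone{\frF}{x})>h$, choose $y$ with $xR^*_\frF y$ and $\h(\cone{\frF}{y})=h$; then $\B_{h-1,k}\notin y$, and since $\neg(yR^*_\frF x)$, Proposition \ref{prop:canon-Di-R} yields some $\alpha\in x$ with $\Di^{*}\alpha\notin y$, so the conjunct indexed by this $\alpha$ fails at $x$ — provided $\alpha\in\Gamma$.

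Everything therefore reduces to choosing $\Gamma$ finite so that, for each of the finitely many $y\in U_h$ of depth $h$ and each $x$ lying strictly above $y$, some $\alpha\in\Gamma$ separates $x$ from the finite cone $R^*_\frF(y)$ (that is, $\alpha\in x$ and $\alpha$ fails throughout $R^*_\frF(y)$). This is the main obstacle, since a priori infinitely many such $x$ occur. I expect to resolve it by showing, for the finitely many $y\in U_h$, that the set of points lying strictly above $y$ is closed in the Stone topology of $\frF$ — equivalently, that each finite cone $R^*_\frF(y)$ is clopen — so that this set is compact and finitely many separating formulas suffice. Here the finiteness of $\frFh$ and the Maximality lemma (Proposition \ref{prop:max}) are the essential tools, the latter guaranteeing that maximal elements of definable subsets of $R^*_\frF(y)$ exist and realize the expected depths; this is the step I anticipate will require the most care.

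Finally, (2) follows cleanly from (1). Given $x$ with $\h(\cone{\frF}{x})>h+1$, we have $\B_{h,k}\notin x$ by (1), and the set $R^*_\frF(x)\cap\{z\mid \B_{h,k}\notin z\}$, which by (1) equals $R^*_\frF(x)\cap\{z\mid\h(\cone{\frF}{z})>h\}$, contains $x$ and is of the form required by the Maximality lemma (with $\vf=\neg\B_{h,k}\in x$). Let $y$ be a maximal element of it. Then $\h(\cone{\frF}{y})>h$, while every proper successor $z$ of $y$ lies in $R^*_\frF(x)$ and, by maximality, satisfies $\B_{h,k}$, i.e. $\h(\cone{\frF}{z})\le h$; hence $\h(\cone{\frF}{y})=h+1$ and $xR^*_\frF y$, which is precisely the $(h+1)$-heaviness of $\frF$.
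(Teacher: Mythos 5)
Your inductive setup, the base case, the ``easy'' inclusion for $\B_{h,k}$, and the derivation of statement~2 from statement~1 are all sound --- the last coincides with the paper's own argument via the Maximality lemma (Proposition~\ref{prop:max}). The genuine gap is at the step you yourself flag as needing the most care, and it is not a technicality: you reduce statement~1 to the claim that the finite cones $R^*_\frF(y)$ of depth-$h$ points are \emph{clopen} in the Stone topology of the $k$-canonical frame $\frF$, and you hope to get this from compactness plus the Maximality lemma. But that claim is essentially equivalent to statement~1 itself. Indeed, for $a\in\Wh$ every point of $R^*_\frF(a)$ has depth $\leq h$, so $R^*_\frF(a)\cap\val{\alpha(a)}=\{a\}$; hence clopenness of the cones of points of $\Wh$ amounts to each point of $\Wh$ being definable by a single formula \emph{in the whole frame} $\frF$, which is exactly the content being proved. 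The $k$-tabularity of $\vL[h]$ only gives $\alpha(a)$ defining $a$ \emph{inside} $\Wh$; the entire difficulty is excluding points of depth $>h$ that also contain $\alpha(a)$. Neither cited tool can do this: cones in canonical frames are closed (intersections of clopens, by Proposition~\ref{prop:canon-Di-R}) but not open in general, and the Maximality lemma only produces maximal points inside already-definable sets --- it says nothing about definability (openness) of cones or clusters. Concretely, your compactness argument breaks down because the set $T_y$ of points strictly seeing $y$ need not be closed: its closure can meet the cluster of $y$, and no formula $\alpha$ failing throughout $R^*_\frF(y)$ can cover a neighbourhood of such a limit point. Ruling out precisely this limit behaviour is the theorem; so the reduction is circular.

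The missing idea is the Jankov--Fine construction, which is how the paper breaks this circle. Since $\Wh$ is finite, take $\alpha(a)$ defining $a$ within $\Wh$ (in suitable normal form) and set $\beta(a)=\alpha(a)\wedge\gamma$, where $\gamma$ is the $\Box^*$-prefixed conjunction recording the whole structure of $\frFh$: the formulas $\alpha(b_1)\imp\Di_i\alpha(b_2)$ for $(b_1,b_2)\in\Rh_i$, the formulas $\alpha(b_1)\imp\neg\Di_i\alpha(b_2)$ for pairs \emph{not} in $\Rh_i$, and the disjunction $\bigvee\{\alpha(b)\mid b\in\Wh\}$. One then proves, by induction on \emph{all} $k$-formulas $\vf$, that $\beta(a)\in x$ implies ($\vf\in a$ iff $\vf\in x$); since $x$ and $a$ are both maximal consistent sets of $k$-formulas, this forces $x=a$, i.e.\ each $a\in\Wh$ is definable in all of $W$. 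With these $\beta(a)$ in hand, statement~1 follows at once by taking $\Bik=\bigvee\{\beta(a)\mid \h(\cone{\frF}{a})\leq i\}$ for every $i\leq h$ simultaneously --- no induction on $h$ and no conjunctive shape for $\B_{h,k}$ are needed --- and your final paragraph then gives statement~2 exactly as in the paper.
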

\begin{proof}
The case $h=0$ follows from Proposition \ref{prop:1heavy}. Suppose $h>0$.

Let $\frF=(W,(R_i)_{i<n})$ be the $k$-canonical frame of $\vL$.
By Proposition \ref{prop:canon-pointwise},  the frame  $\frFh=(\Wh,(\Rh_i)_{i<n})$ is the $k$-canonical frame of $\vL[h]$.
Since $\vL[h]$ is $k$-tabular, it follows that $\Wh$ is finite and for every $a$ in $\Wh$ there exists a $k$-formula $\alpha(a)$
such that 
\begin{equation}\label{eq:atom}
\AA b \in \Wh\, ( \alpha(a)\in b \;\iff \;b=a).
\end{equation}
Without loss of generality we may assume that $\alpha(a)$ is of the form

\begin{equation}\label{eq:vars-a}
p_0^\pm\wedge\ldots\wedge p_{k-1}^\pm \wedge \vf,
\end{equation}
where $p_i^\pm\in\{p_i,\neg p_i\}$.

For $a\in \Wh$ let $\beta(a)$ be the following
Jankov-Fine formula:
\begin{equation}\label{eq:vars-beta}
\beta(a)=\alpha(a)\wedge \gamma,
\end{equation}
where $\gamma$ is the conjunction of the formulas
\begin{eqnarray}
\Box^*  \bigwedge \left\{ \alpha(b_1)\imp \Di_i \alpha(b_2) \mid  (b_1,b_2)\in \Rh_i,\; i<n    \right\}   \label{eq:Jank1}\\
\Box^*  \bigwedge \left\{ \alpha(b_1)\imp \neg\Di_i \alpha(b_2) \mid  (b_1,b_2)\in \Wh^2\setminus\Rh_i,\; \;i<n    \right\}\label{eq:Jank2}\\
\Box^* \vee \left\{ \alpha(b) \mid b\in \Wh \right\}\label{eq:Jank4}
\end{eqnarray}

For all $x,y\in W$, $i<n$ we have
\begin{equation}\label{eq:gamma-up}
\textrm{if } \gamma\in x \textrm{ and } xR_iy,  \textrm{ then } \gamma\in y.
\end{equation}

We claim that 
\begin{equation}\label{eq:definable}
\AA a\in \Wh\;\AA x\in W\; ( \beta(a)\in x \;\iff \;x=a).
\end{equation}
To prove this, by induction on the length of formulas
we show that for all $k$-formulas $\vf$, all $a\in \Wh$, and all $x\in W$,
\begin{equation}\label{eq:same-formulas}
\textrm{if } \beta(a)\in x, \textrm{ then } \vf\in a \iff \vf\in x.
\end{equation}
The basis of induction follows from (\ref{eq:vars-a}). The Boolean cases are trivial.

Assume that $\vf=\Di_i\psi$.

First, suppose $\Di_i\psi\in a$. We have $\psi\in b$ for some $b$ with $a\Rh_ib$.
Since $\beta(a) \in x$, by (\ref{eq:Jank1}) we have $\Di_i\alpha(b)\in x$.
Then we have $\alpha(b)\in y$ for some $y$ with $xR_iy$; by (\ref{eq:gamma-up}), $\beta(b)\in y$. Hence $\psi\in y$  by induction
hypothesis. Thus $\Di_i\psi\in x$.

Now let us show that $\Di_i\psi\in a$ whenever $\Di_i\psi\in x$. In this case we have $\psi\in y$ for some $y$ with $xR_iy$.
By (\ref{eq:Jank4}) we infer that $\alpha(b)\in y$ for some $b\in \Wh$.
Thus $\Di_i \alpha (b)\in x$. Since $\alpha(a)\in x$, it follows from (\ref{eq:Jank2}) that $a\Rh_ib$.
By (\ref{eq:gamma-up}) we have $\gamma\in y$, thus
$\beta(b)\in y$;
by induction
hypothesis $\psi\in b$. Hence $\Di_i \psi\in a$, as required.

Thus (\ref{eq:same-formulas}) is proved and (\ref{eq:definable})  follows.



Now using the formulas (\ref{eq:vars-beta}), for $i\leq h$ we can define the formulas  $\Bik$ such that
for all $x$ in $\frF$,
\begin{equation}
\text{the depth of $x$ in $\frF$ is $\leq i$}\;\tiff\;\Bik\in x.
\end{equation}
For this, we put
\begin{equation}
\Bik=\bigvee\{\beta(a)\mid \h(\cone{\frF}{a})\leq i\}.
\end{equation}
This proves the first statement of the theorem.

In particular, it follows that $W\setminus \Wh$ is definable in the $k$-canonical model of $\vL$:
$$
x\in W\setminus \Wh \tiff  \neg \B_{h,k}\in x.
$$
Now by Proposition \ref{prop:max} we have that if $x$ is not in $\Wh$, then
there exists a maximal $y$ in $R^*_\frF(x)\setminus \Wh$.
Hence if $(y,z)\in R_\frF^*$ and $(z,y)\notin R_\frF^*$ for some $z$, then $z$ belongs to $\Wh$, which means $\h(\cone{\frF}{z})\leq h$.
Thus $\h(\cone{\frF}{y})\leq h+1$. On the other hand, $y\notin \Wh$.
It follows that $\h(\cone{\frF}{y})=h+1$, as required.
\end{proof}

The logic $\vL[0]$ is inconsistent, so it is $k$-tabular. Hence Proposition \ref{prop:1heavy} can be considered as a particular case of the above theorem.

Note that formulas (\ref{eq:vars-beta}) define atoms in the Lindenbaum-Tarski (i.e., free)  $k$-generated algebra of $\vL$.


\begin{theorem}\label{thm:main}
Let $\vL$ be a pretransitive logic, $h,k< \omega$. If $\vL[h]$ is $k$-tabular,  then for all $k$-formulas $\vf$ we have
\begin{equation}\label{eq:translMain}
\vL[h+1] \vd\vf \tiff \vL \vd \bigwedge_{i\leq h} (\Box^*(\Box^* \vf\imp \Bik)\imp \Bik).
\end{equation}
\end{theorem}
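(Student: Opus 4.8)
The plan is to reason semantically in the $k$-canonical frame $\frF$ of $\vL$, exploiting the formulas $\Bik$ and the $(h+1)$-heaviness provided by Theorem~\ref{thm:topheavy}. If $\vL$ is inconsistent both sides of (\ref{eq:translMain}) hold trivially, so I may assume $\vL$ consistent and invoke Theorem~\ref{thm:topheavy}. I shall use repeatedly that $\Bik\in x$ iff $\h(\cone{\frF}{x})\leq i$, that by Proposition~\ref{prop:canon-pointwise} the subframe $\frametop{\frF}{h+1}$ is the $k$-canonical frame of $\vL[h+1]$, and that (by Proposition~\ref{prop:canon-Di-R} and the Canonical model theorem) $\Box^*\chi$ holds at a point exactly when $\chi$ holds at all its $R^*_\frF$-successors. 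One structural remark underlies everything: if $xR^*_\frF z$ and $\h(\cone{\frF}{z})=\h(\cone{\frF}{x})$, then $z$ lies in the cluster of $x$, since otherwise $z$ is strictly above $x$ in the skeleton and prepending the cluster of $x$ to a height-realizing chain of $\cone{\frF}{z}$ would strictly increase the depth.

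For the implication from right to left I assume each conjunct is valid in $\frF$ and, by the Canonical model theorem, aim to show $\vf\in x$ for every $x$ of depth $d:=\h(\cone{\frF}{x})\leq h+1$. The idea is to feed $x$ into the single conjunct indexed by $i:=d-1$ (legitimate since $1\leq d\leq h+1$ forces $0\leq i\leq h$). As $\h(\cone{\frF}{x})=d>i$ we have $\Bik\notin x$, so validity of that conjunct gives $\Box^*(\Box^*\vf\imp\Bik)\notin x$, i.e. $\Di^*(\Box^*\vf\wedge\neg\Bik)\in x$. This yields a point $z$ with $xR^*_\frF z$, $\Box^*\vf\in z$, and depth $>i=d-1$; being above $x$ its depth is also $\leq d$, hence exactly $d$. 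By the structural remark $z$ is in the cluster of $x$, so $zR^*_\frF x$, and $\Box^*\vf\in z$ then delivers $\vf\in x$.

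For the implication from left to right I assume $\vL[h+1]\vd\vf$, so $\vf$ holds at every point of depth $\leq h+1$, fix $i\leq h$ and a point $x$ with $\Box^*(\Box^*\vf\imp\Bik)\in x$, and must derive $\Bik\in x$, i.e. $\h(\cone{\frF}{x})\leq i$. Supposing otherwise, I manufacture a witness $y$ above $x$ with $i<\h(\cone{\frF}{y})\leq h+1$: if $\h(\cone{\frF}{x})\leq h+1$ take $y=x$; otherwise $x\notin\frametop{\frF}{h+1}$ and $(h+1)$-heaviness furnishes $y$ with $xR^*_\frF y$ and $\h(\cone{\frF}{y})=h+1>i$. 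In either case every point above $y$ has depth $\leq h+1$, hence satisfies $\vf$, so $\Box^*\vf\in y$; then $\Box^*(\Box^*\vf\imp\Bik)\in x$ together with $xR^*_\frF y$ forces $\Bik\in y$, contradicting $\h(\cone{\frF}{y})>i$. Thus $\Bik\in x$, each conjunct is valid, and the right-hand side follows by the Canonical model theorem.

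The routine content is the canonical-model bookkeeping and the Boolean manipulations. The genuinely load-bearing points, which I expect to be the main obstacles, are: (i) the structural remark that equal depth under $R^*_\frF$ forces membership in a common cluster, since this is precisely what converts $\Box^*\vf$ at the high witness $z$ into $\vf$ at $x$ in the first direction; and (ii) the use of $(h+1)$-heaviness to produce, above any too-deep point, a witness of controlled depth $h+1$ in the second direction. Matching each depth $d$ to exactly the conjunct $i=d-1$ is the one delicate indexing step that makes the right-to-left direction go through.
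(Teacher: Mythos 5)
Your proof is correct and follows essentially the same route as the paper's own argument: both directions work in the $k$-canonical frame of $\vL$, using Theorem \ref{thm:topheavy} (the formulas $\Bik$ and $(h{+}1)$-heaviness) together with Proposition \ref{prop:canon-pointwise} identifying $\frametop{\frF}{h+1}$ with the canonical frame of $\vL[h+1]$, and your two key steps --- choosing the conjunct $i=d-1$ for a point of depth $d$ and the equal-depth-implies-same-cluster observation --- are exactly the paper's. The only cosmetic differences are that you run the left-to-right direction as a proof by contradiction instead of directly exhibiting the witness falsifying the antecedent, and you justify $\Box^*\vf\in y$ semantically (all successors have depth $\leq h+1$) rather than by noting $\Box^*\vf\in\vL[h+1]\subseteq y$.
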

\begin{proof}
We may assume that both $\vL$ and $\vL[h+1]$ are consistent (Proposition \ref{prop:cons}).
Let $\frF$ be the $k$-canonical frame of $\vL$.

Suppose $\vL[h+1]\vd\vf$. We claim that for all $i \leq h$,
$\Box^*(\Box^* \vf\imp \Bik)\imp \Bik$
is true at every point $x$ in the $k$-canonical model of $\vL$.
Let $\neg \Bik$ be in $x$. Let us show that $\neg \Bik \wedge \Box^*\vf\in y$ for some $y$ with $xR_\frF^*y$.
First, assume that $x$ is in $\frametop{\frF}{h+1}$. By Proposition \ref{prop:canon-pointwise},  $x$ contains $\vL[h+1]$.
Since $\vf\in \vL[h+1]$, we have $\Box^*\vf \in \vL[h+1]$. Thus $\Box^*\vf\in x$. Since $R^*_\frF$ is reflexive, in this case
we can put $y=x$.
%
Second, suppose $x$ is not in $\frametop{\frF}{h+1}$. By Theorem \ref{thm:topheavy}, there exists $y$ such that $xR_\frF^*y$ and $\h(\cone{\frF}{y})=h+1$.
We have $\Box^*\vf\in y$ and $\Bik\notin y$.  This proves the ``only if'' part.

Now suppose that
$\vL \vd \Box^*(\Box^* \vf\imp \Bik)\imp \Bik$
for all $i\leq h$.
Assume $\h(\cone{\frF}{x})=i \leq h+1$. In this case $\B_{i-1,k} \notin x$. Since
\todo{DC}
$\Box^*(\Box^* \vf\imp \B_{i-1,k})\imp \B_{i-1,k}$ is in $x$,
it follows that  $\neg \B_{i-1,k} \wedge \Box^*\vf$ is in $y$ for some $y$ with $xR_\frF^* y$. The first conjunct says that $y$ is not in $\frametop{\frF}{i-1}$.
Since $y$ is in $\frametop{\frF}{i}$, it follows that  $\h(\cone{\frF}{y})=i$.
Hence $y$ and $x$ belong to the same cluster. Since $\Box^*\vf\in y$, we obtain $\vf\in x$.
It follows that $\vf\in x$ for all $x$ in $\frametop{\frF}{h+1}$. By Proposition \ref{prop:canon-pointwise}, $\vL[h+1]\vd\vf$.
\end{proof}

Note that $\B_{0,k}$ is $\bot$ for all $k<\omega$.
Thus, (\ref{eq:translMain}) generalizes the translation described in Theorem \ref{thm:embedd-corr}.

\smallskip

Theorem \ref{thm:main} provides translations for the case when $\vL$ is a unimodal transitive logic
(recall that transitive logics of finite height are locally tabular \cite{Seg_Essay}).
It should be noted that in this case
Theorem \ref{thm:topheavy}, the key ingredient of the proof of Theorem \ref{thm:main}, has been known since 1970s:
formulas $\Bik$ in transitive canonical frames were described in \cite{ShehtRigNish} (see also \cite{Fine85}, \cite{Bellissima1985}).

An analog of Theorem \ref{thm:main} can be formulated for intermediate logics.
Formulas $\Bik^\Int$ defining points of finite depth in finitely generated intuitionistic canonical frames were described
in \cite{ShehtPhD} (see also \cite{Bellissima1986Ht}).
Similarly to the proof of Theorem  \ref{thm:main},
 it can be shown that
$$\IL[h+1]\vd\vf  \tiff  \IL\vd \bigwedge_{i\leq h} ((\vf\imp \Bik^\Int)\imp \Bik^\Int)$$
for all finite $h$ and for all $k$-formulas $\vf$.

\section{Corollaries, examples, and open problems}\label{sec:coroll}

The translation (\ref{eq:translMain})  holds for all finite $h,k$ in the case when $\vL$ is a transitive unimodal logic.
Indeed, by the Segerberg -- Maksimova criterion (Theorem \ref{thm:seg-max-crit}), a transitive logic is locally tabular iff it is of finite height.
This criterion was recently generalized to a wide family of pretransitive logics \cite{LocalTab16AiML}. For example,
if a unimodal $\vL$ contains the formula $\Di^{m+1} p \imp \Di p\vee p$ for some $m>0$, then  $\vL$ is locally tabular iff it is of finite height.
Thus, (\ref{eq:translMain}) holds for all finite $h,k$ in this case too.

However, in general $k$-tabularity of $\vL[h]$ depends both on $h$ and on $k$.
\begin{example}
Consider the smallest reflexive 2-transitive logic $\LK{}+\{p\imp \Di p, \Di^3 p\imp \Di^2 p\}$ and
its extension $\vL$ with the McKinsey formula for the master modality, $\Box^2 \Di^2 p\imp \Di^2 \Box^2 p$.
Maximal clusters in the canonical frames of $\vL$ are reflexive singletons, so $\vL[1]=\LK{}+p\lra \Box p$
by Proposition \ref{prop:canon-pointwise}. Clearly, $ \vL[1]$ is locally tabular.
It follows that we have the translation (\ref{eq:translMain}) from $\vL[2]$ to $\vL$ for all finite $k$.

However,  $\vL[2]$ is not even $1$-tabular. To see this,
consider the frame $\frF_0=(\omega,R_0)$, where
$$x R_0 y \quad \tiff \quad x \neq y+1 \textrm{ and } y\neq x+1.$$
Let $\frF=(\omega+1,R)$, where $xRy$ iff $xR_0y$ or $y=\omega$. Clearly, $\frF\mo\vL[2]$.
Consider a model $\mM$ on $\frF$ such that $x\mo p_0$ iff $x=0$ or $x= \omega$.
Put $\alpha_0=p_0\wedge \Di\neg p_0$, $\alpha_1=\neg \Di \alpha_0\wedge \neg p_0$, and
$\alpha_{i+1}=\neg (\Di \alpha_i\vee\alpha_{i-1}) \wedge \neg p_0$ for $i>0$. By an easy induction,
in $\mM$ we have for all $i$: $x\mo \alpha_i$ iff $x=i$. Thus if $i\neq j$, then $\alpha_i\lra \alpha_j\notin \vL$.
\end{example}
It is not difficult to construct other examples of this kind
for arbitrary finite $h$: there are
pretransitive logics such that $\vL[h]$ is locally tabular,
and $\vL[h+1]$ is not one-tabular.

With the parameter $k$, the situation is much more intriguing.  The following result was proved in \cite{Max1975}:
\begin{equation}\label{eq:maxOneTab}
\text{ A unimodal transitive logic is locally tabular iff
it is 1-tabular. }
\end{equation}
The recent results \cite{LocalTab16AiML} show that this equivalence also holds for many non-transitive logics. For example, if a unimodal $\vL$ contains
$\Di^{m+1} p \imp \Di p\vee p$ for some $m>0$, then it is locally tabular iff it is 1-tabular.
The  question whether this equivalence holds for every modal logic
has been open since  1970s.
\begin{theorem}
There exists a unimodal 1-tabular logic $\vL$ which is not locally tabular.
\end{theorem}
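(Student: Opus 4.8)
The plan is to exhibit $\vL$ as the logic $\Log(\frF)$ of a carefully chosen frame $\frF$ and then verify the two halves separately: $1$-tabularity via the finiteness of the $1$-canonical frame (Proposition~\ref{prop:LTbasic}), and the failure of local tabularity by producing infinitely many pairwise $\vL$-inequivalent $2$-formulas. By the theorem that every $1$-tabular logic is pretransitive of finite height, and by the Segerberg--Maksimova criterion (Theorem~\ref{thm:seg-max-crit}), such a $\vL$ is necessarily pretransitive, of finite height, and \emph{non-transitive}; so I would look for it among pretransitive logics of height $2$, the first place where transitivity can genuinely fail. The guiding principle is to \emph{separate the counting power of one and of two variables}: $\frF$ should be rich enough that a pair of proposition letters can mark and propagate an unbounded counter, yet so symmetric that a single letter can constrain only boundedly many point-types.

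Concretely, I would build $\frF$ so that its points carry (at least) two independent coordinates and its accessibility relation encodes a non-adjacency condition between coordinates, in the spirit of $\frF_0=(\omega,R_0)$ with $xR_0y \iff x\neq y+1 \wedge y\neq x+1$ from the preceding example, whose virtue is that ``distance from a marked point'' is modally definable. The design goal is that pinning down a single position along the counter requires fixing \emph{both} coordinates, hence two proposition letters, whereas a single letter can constrain only one coordinate and leaves an unbounded symmetric remainder that the modalities cannot resolve. I would set $\vL=\Log(\frF)$ (or the logic of the class of its finite analogues, to secure the finite model property) and check by inspection that $\frF$ is $2$-transitive of height $2$, so that $\vL$ is pretransitive of finite height, exactly as every $1$-tabular logic must be.

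For the failure of local tabularity I would reproduce, almost verbatim, the counting argument of the preceding example, but with the witnessing formulas $\alpha_i$ built from \emph{both} $p_0$ and $p_1$: choosing a valuation on $\frF$ that marks the origin of the counter with the pair $(p_0,p_1)$ and propagating along the non-adjacency relation by the same $\neg\Di$-recursion, one obtains $2$-formulas satisfying, in a fixed model, $x\mo\alpha_i \iff x=i$. Since the positions $i$ are pairwise distinct, $\alpha_i\lra\alpha_j$ is refuted in $\frF$ for $i\neq j$, so $\alpha_i\lra\alpha_j\notin\vL$; hence $\vL$ is not $2$-tabular and, a fortiori, not locally tabular.

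The hard part, and the entire point of the example, is the proof of $1$-tabularity: I must show that the $1$-canonical frame of $\vL$ is finite, equivalently that there are only finitely many $1$-formulas up to $\vL$-equivalence. The strategy is a structural classification of the $1$-generated rooted models of $\vL$: I would argue that for \emph{any} single-variable valuation the set where $p_0$ holds and the set where it fails each induce a configuration in which the relation can no longer encode an unbounded counter, so that every point falls into one of boundedly many $1$-types, whence a selective-filtration or p-morphism argument bounds the size of the $1$-canonical frame uniformly. The delicate step is verifying that the coordinate device really defeats \emph{all} single-variable counting schemes, not merely the particular one used for $2$ variables -- this amounts to ruling out every way a single color could locate a position, and it is precisely the asymmetry ``two variables count, one variable cannot'' that the construction must enforce. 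Once this combinatorial core is established, $1$-tabularity follows from Proposition~\ref{prop:LTbasic}, and together with the previous paragraph the theorem is proved.
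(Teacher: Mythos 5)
Your proposal has a genuine gap: it is a plan, not a proof. No concrete frame is ever exhibited---the ``two-coordinate counter'' frame is specified only by the properties you would like it to have---and the step you yourself call ``the hard part, and the entire point of the example,'' namely 1-tabularity, is left as a strategy (``I would argue that\ldots'', ``the delicate step is verifying\ldots'') that is never executed. Since the theorem is an existence statement, everything hinges on writing down one frame and verifying both halves for it; as it stands, neither half is verified for anything. Even the easier half (failure of local tabularity by a 2-variable counting scheme, modelled on the preceding example in the paper) cannot be checked without the frame. A secondary point: your guiding heuristic is also off. You argue the example should be sought at height 2, ``the first place where transitivity can genuinely fail,'' but non-transitivity is perfectly compatible with height 1, and in fact the paper's example has height 1.

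For comparison, the paper's proof needs neither a counter nor an anti-counting argument. It takes $\vL=\Log(\frF)$ for $\frF=(\omega+1,R)$ with $xRy$ iff $x\leq y$ or $x=\omega$; the top point $\omega$ sees and is seen by every point, so $\frF$ is a single $R^*$-cluster (2-transitive, height 1). Failure of local tabularity comes not from explicit inequivalent formulas but from a structural theorem of \cite{LocalTab16AiML}: if the logic of a frame is locally tabular, then so is the logic of every restriction $\frF\restr V$; here $\frF\restr\omega=(\omega,\leq)$ has infinite height, hence its logic is not locally tabular by Theorem \ref{thm:seg-max-crit}. And 1-tabularity is obtained algebraically via Mal'cev's observation \cite{Malcev}: if every 1-generated subalgebra of an algebra has at most $m$ elements for a fixed $m$, then the free 1-generated algebra of the variety it generates is finite; one then checks that every 1-generated subalgebra of the complex algebra of $\frF$ has at most 8 elements, and concludes by Proposition \ref{prop:LTbasic}. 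These two tools---the restriction theorem and the subalgebra bound---are precisely what replace the two steps your outline leaves open; without something playing their role, your approach cannot be completed as stated.
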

\begin{proof}[Proof (sketch).]
Let $\vL$ be the logic of the frame
$(\omega+1,R)$, where
$$xRy \tiff  x\leq y  \textrm{ or }x=\omega.$$

First, we claim that $\vL$ is not locally tabular.

The following fact follows from Theorem 4.3 and Lemma 5.9 in
\cite{LocalTab16AiML}:
if the logic of a frame $\frF$ is locally tabular, then the logic of an arbitrary restriction $\frF\restr V$ of $\frF$ is locally tabular.

The restriction of $(\omega+1,R)$ onto $\omega$ is the frame $(\omega,\leq)$, which is not locally tabular (it is of infinite height).
Thus $\vL$ is not locally tabular.

To show that $\vL$ is 1-tabular, we need the following observation.
If every $k$-generated subalgebra of an algebra $\frA$ contains at most $m$ elements for some fixed $m<\omega$, then
the free $k$-generated algebra in the variety generated by $\frA$ is finite; see \cite{Malcev}.

Consider the complex algebra $\frA$ of the frame $(\omega+1,R)$. One can check that
every $1$-generated subalgebra of $\frA$ contains at most 8 elements. By the above observation,  $\vL$ is 1-tabular.
\end{proof}

It is unknown whether 2-tabularity of a modal logic implies its local tabularity.
At least, does $k$-tabularity  imply  local tabularity, for some fixed $k$ for  all unimodal logics?
The same questions are open in the intuitionistic case \cite[Problem 2.4]{GuramRevazProblem}.

\smallskip

Finite height is not a necessary condition for local tabularity of intermediate logics. What can be an analog of Gliveko's
translation in the case of a locally tabular intermediate logic with no finite height axioms?
Another generalization can  probably be found in the area of modal intuitionistic logics.
In \cite{GuramGlivenko}, Glivenko type theorems were proved  for extensions of the logic $\MIPC$;
in
\cite{Guram-Revaz}, local tabularity of these extensions was considered.
What can be an analog of Theorem \ref{thm:main} for modal intuitionistic logics?

\smallskip

In \cite{Rybakov1992}, Glivenko's theorem was used to obtain decidability (and the finite model property) for extensions of $\LS{4}$ with $\Box\Di$-formulas
(such formulas are built from literals $\Box\Di p_i$). An analog of this result
can be obtained for extensions of a pretransitive logic $\vL$ in the case when
$\vL$ is decidable  (or has the finite model property) and
$\vL[1]$ is locally tabular.

\section{Acknowledgements}
I would like to thank Valentin Shehtman for many useful discussions and comments.
I would also like to thank the three anonymous reviewers for their
suggestions and questions on the early version of the manuscript.

The work on this paper was supported by the Russian Science Foundation under grant 16-11-10252 and carried out at Steklov Mathematical Institute of Russian Academy of Sciences.

\bibliographystyle{eptcs}
\bibliography{gliv-bib}

\end{document}